  \newtheorem{The}{Theorem}[section]
  \newtheorem{Pro}[The]{Proposition}
  \newtheorem{Lem}[The]{Lemma}
  \newtheorem{Cor}[The]{Corollary}
  \newtheorem{Def}[The]{Definition}
  \newtheorem{Rem}[The]{Remark}
  \newtheorem{Que}[The]{Question}
  \newtheorem{Examp}[The]{Example}
    \let\oldproofname=\proofname
   \renewcommand{\proofname}{\textit{\rm\bf\oldproofname}}
  \title{\bf Two Generalizations of   the Wedderburn-Artin Theorem  with Applications  \thanks {The research of the first author was in part supported by
  a grant from IPM (No. 95130413). This research is partially carried out in the IPM-Isfahan Branch.}
  \thanks
   {{\it Key Words}: Virtually simple module; virtually semisimple module; left principal ideal domain; V-domain;  FGC-ring;  Wedderburn-Artin Theorem;  Krull-Schmidt Theorem.}
  \thanks {2010{ \it Mathematics Subject Classification}. Primary 16D60, 16D70, 16K99 Secondary 16S50, 15B33. }}
  \author{{\bf M. Behboodi$^{{\rm a,b}}$\thanks{Corresponding author.}, {\bf A. Daneshvar}$^{{\rm a}}$ ~   and~} {\bf M. R. Vedadi$^{{\rm a}}$}\\
   {\small{ $^{{\rm a}}$Department of Mathematical Sciences,  Isfahan University of Technology}}\vspace{-1mm}\\
    {\small{ P.O.Box :  84156-83111,   Isfahan,   Iran}}\\
   {\small{ $^{{\rm b}}$School of Mathematics,   Institute for Research in Fundamental Sciences
   (IPM)}}\vspace{-1mm}\\ {\small{ P.O.Box :   19395-5746, Tehran,  Iran}}\vspace{-1mm}\\
   {\small{mbehbood@cc.iut.ac.ir}}\vspace{-1mm}\\
   {\small{a.daneshvar@math.iut.ac.ir}}\vspace{-1mm}\\
   {\small{mrvedadi@cc.iut.ac.ir}}}
  \date{}
\begin{document}
  \maketitle

 \begin{abstract}
  \noindent
  \noindent   We say that an $R$-module  $M$ is  {\it virtually simple} if $M\neq (0)$ and $N\cong M$ for every non-zero submodule $N$ of $M$, and {\it virtually semisimple} if each submodule of $M$  is isomorphic to a direct summand of $M$. We carry out a study of  virtually semisimple modules and modules which are direct sums  of virtually simple modules. Our theory provides two  natural generalizations   of the Wedderburn-Artin Theorem and  an analogous to the classical Krull-Schmidt Theorem. Some applications of these  theorems  are indicated. For instance, it is shown  that the following statements are equivalent for a ring $R$: (i)  Every  finitely generated left (right)  $R$-modules  is virtually semisimple; (ii)  Every  finitely generated left (right)  $R$-modules  is a direct  sum of virtually simple modules; (iii)  $R\cong\prod_{i=1}^{k} M_{n_i}(D_i)$ where $k, n_1,\ldots,n_k\in \Bbb{N}$ and each $D_i$  is a   principal  ideal V-domain; and {\rm (iv)}  Every  non-zero finitely generated left $R$-module  can be written uniquely (up to isomorphism and  order of the factors)  in the form  $ Rm_1 \oplus\ldots\oplus Rm_k$  where  each $Rm_i$ is either a simple $R$-module or   a left  virtually simple direct  summand   of $R$.

      \end{abstract}

 \section{\bf Introduction}

The subject of determining structure of rings and algebras  over which all (finitely generated)
modules  are direct sums of
certain cyclic modules has a long history. One of the first important contributions in this direction is due to Wedderburn \cite{Wed}. He  showed that  every module over a finite-dimensional $K$-algebra $A$ is a direct sum of simple modules  if and only if
 $A\cong \prod _{i=1}^ m M_{n_i}(D_i)$ where $m, n_1, \ldots,n_m\in\Bbb{N}$ and each $D_i$ is finite-dimensional
division algebra over $K$. After that in 1927, E. Artin generalizes the
Wedderburn's theorem for semisimple algebras (\cite{Ar}). Wedderburn-Artin's result is a landmark in the theory of
non-commutative rings. We recall this theorem as follows:

\begin{The}
\textup{(Wedderburn-Artin Theorem).} For a ring R, the following conditions are equivalent:\vspace{1mm}\\
{\rm (1)}   Every left (right)  $R$-module is a direct sum of simple modules.\\
{\rm (2)}   Every finitely generated  left  (right) $R$-module is a direct sum of simple modules.\\
{\rm (3)}   The left (right)  $R$-module $R$  is a direct sum of simple modules.\\
{\rm (4)}  $R \cong \prod _{i=1}^ k M_{n_i}(D_i)$ where $k, n_1, \ldots,n_k\in\Bbb{N}$ and each $D_i$ is a  division ring.
\end{The}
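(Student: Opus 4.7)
The plan is to establish the cycle $(1)\Rightarrow(2)\Rightarrow(3)\Rightarrow(4)\Rightarrow(1)$. The first two implications are immediate: $(1)\Rightarrow(2)$ by restriction to finitely generated modules, and $(2)\Rightarrow(3)$ because ${}_{R}R$ is cyclic (generated by $1$), hence finitely generated over itself.

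For $(3)\Rightarrow(1)$, I would first note that if ${}_{R}R$ is a direct sum of simple submodules, then any free left $R$-module $R^{(I)}$ is likewise a direct sum of simple modules. Since an arbitrary left $R$-module is a quotient of a free module, and the class of semisimple modules is closed under quotients (a standard consequence of the equivalence of semisimplicity with the condition that every submodule is a direct summand), every left $R$-module is a direct sum of simples.

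For $(3)\Rightarrow(4)$, the central technical step, suppose $R=\bigoplus_{\alpha\in A}S_\alpha$ with each $S_\alpha$ simple. Because $1\in R$ has nonzero components in only finitely many summands and these components already generate $R$, the index set $A$ must be finite. Collecting isomorphic summands yields $R\cong S_{1}^{n_1}\oplus\cdots\oplus S_{k}^{n_k}$ with the $S_i$ pairwise non-isomorphic simple modules. By Schur's lemma each $D_i:=\operatorname{End}_R(S_i)$ is a division ring, and since $\operatorname{Hom}_R(S_i,S_j)=0$ for $i\neq j$, the canonical identification $R\cong\operatorname{End}({}_{R}R)^{\mathrm{op}}$ gives
\[
R\;\cong\;\Bigl(\prod_{i=1}^{k} M_{n_i}(D_i)\Bigr)^{\mathrm{op}}\;\cong\;\prod_{i=1}^{k} M_{n_i}(D_i^{\mathrm{op}}).
\]
Since the opposite of a division ring is a division ring, relabeling $D_i^{\mathrm{op}}$ as $D_i$ yields the required form. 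For $(4)\Rightarrow(1)$, every module over a division ring is a vector space and hence semisimple; every module over $M_{n_i}(D_i)$ decomposes as a direct sum of copies of the simple column module (a Morita-theoretic observation); and a module over the finite direct product $\prod_i M_{n_i}(D_i)$ splits along the central idempotents into a direct sum of modules over the individual factors, each of which is semisimple by the preceding observations.

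The main obstacle I anticipate is the implication $(3)\Rightarrow(4)$, specifically the opposite-ring bookkeeping in the endomorphism-ring computation, so that the division rings and matrix sizes end up in the correct configuration; the remaining implications reduce either to trivial inclusions or to standard semisimplicity and product-of-rings arguments.
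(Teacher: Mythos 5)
Your proof is correct, and it is the standard textbook argument. Be aware, though, that the paper does not prove this statement at all: it is presented purely as classical background (``We recall this theorem as follows''), attributed to Wedderburn \cite{Wed} and Artin \cite{Ar}, before the paper develops its own generalizations in Theorems \ref{the:maintheorem} and \ref{the:char fully}. So there is no proof in the paper to compare yours against. On its own terms, your cycle $(1)\Rightarrow(2)\Rightarrow(3)\Rightarrow(4)\Rightarrow(1)$ is sound: the first two steps are trivial restrictions; $(3)\Rightarrow(1)$ uses that direct sums and quotients of semisimple modules are semisimple; $(3)\Rightarrow(4)$ uses the finiteness forced by $1\in R$, Schur's lemma together with ${\rm Hom}_R(S_i,S_j)=0$ for $i\neq j$, and the identification $R\cong {\rm End}_R({}_RR)^{\rm op}$; and $(4)\Rightarrow(1)$ uses the Morita equivalence of $M_n(D)$ with $D$ together with splitting along the central idempotents of a finite product. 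The opposite-ring bookkeeping you flag as the chief obstacle is handled correctly via $M_n(D)^{\rm op}\cong M_n(D^{\rm op})$, which preserves the matrix size and replaces each division ring by its (still division-ring) opposite.
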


Another one of the important contributions in this direction is due to G. K\"{o}the \cite{Kot}. He considered rings over which all  modules are direct sums of cyclic modules. K\"{o}the in \cite{Kot}  proved the following. We recall that an {\it Artinian} (resp., {\it Noetherian}) {\it ring} is a
ring which is both a left and right Artinian (resp., Noetherian). A {\it principal ideal
ring} is a ring which is both a left and a right principal ideal ring.

\begin{The}
\textup{(K\"{o}the).}  Over an Artinian principal ideal ring, each module is a direct sum of cyclic modules. Furthermore, if a commutative Artinian ring has the property that all its modules are direct sums of cyclic modules, then it is necessarily a principal ideal ring.
\end{The}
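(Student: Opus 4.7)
The statement splits into two independent implications, and both begin with the same reduction. By the structure theory of (commutative or not) Artinian rings, every Artinian principal ideal ring (respectively every commutative Artinian ring) decomposes as a finite direct product $R\cong R_1\times\cdots\times R_n$ of local Artinian rings, and any $R$-module correspondingly decomposes as a direct sum of $R_i$-modules via the central idempotents. Since a module is a direct sum of cyclic modules over $R$ iff its $R_i$-components are, and since each $R_i$ inherits the hypothesis in each case, I would reduce both implications to the local case.

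\textbf{Sufficiency (Part 1).} Here the local case is a chain ring: $(R,(\pi))$ with $\pi^n=0$ whose ideals are linearly ordered $R\supset(\pi)\supset\cdots\supset(\pi^n)=0$, so every cyclic module is of the form $R/(\pi^k)$. For an arbitrary $R$-module $M$ I would define the height $h(x)=\min\{k:\pi^k x=0\}$ and use Zorn's lemma to select a maximal family $\{x_\alpha\}\subseteq M$ that is independent in the sense that any finite sum $\sum r_\alpha x_\alpha=0$ forces each $r_\alpha x_\alpha=0$. Maximality combined with the chain condition on ideals then forces $M=\bigoplus_\alpha Rx_\alpha$: any $y\in M$ can be adjusted modulo the span by choosing, among $y+$span, a representative of maximal height, and such a representative necessarily lies in the span. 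Each $Rx_\alpha\cong R/(\pi^{h(x_\alpha)})$ is cyclic.

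\textbf{Necessity (Part 2).} Assuming $(R,m,k)$ is local commutative Artinian with every $R$-module a direct sum of cyclics, I want to show $m$ is principal, i.e.\ $\dim_k m/m^2\le 1$. Suppose for contradiction $\dim_k m/m^2\ge 2$. The strategy is to exhibit an $R$-module that cannot be a direct sum of cyclic modules. The standard candidate is a suitable submodule of $R\oplus R$ (or $m$ itself): writing any hypothetical decomposition $\bigoplus_i R/I_i$ and applying $-\otimes_R k$ pins down the number of summands as $\dim_k M/mM$, while comparing composition lengths (available because $R$ is Artinian) and exploiting Nakayama's lemma on the generators constrains the ideals $I_i$; the presence of two independent directions in $m/m^2$ then forces a relation that no such cyclic decomposition can satisfy.

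\textbf{Main obstacle.} The two reductions to the local case and the Zorn construction in Part 1 are routine, and the finitely generated version of Part 1 is already contained in the classical invariant-factor theorem for chain rings. The genuinely hard step is Part 2: producing an explicit module of finite length whose indecomposable direct summands cannot all be cyclic whenever $\dim_k m/m^2\ge 2$, and rigorously ruling out every candidate cyclic decomposition by a length and generator count. This is where the asymmetry between ``every module decomposes into cyclics'' and ``$R$ is a principal ideal ring'' is actually forced.
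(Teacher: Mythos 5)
The paper does not prove this statement: Theorem 1.2 is quoted as classical background, attributed to K\"othe's 1935 paper \cite{Kot}, and no argument is given. So there is no internal proof to compare yours against; I can only assess the sketch on its own terms.

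Your reduction is fine for Part 2, where $R$ is assumed commutative and Artinian, hence a finite product of local Artinian rings. But for Part 1 the ring is only assumed to be a two-sided Artinian principal ideal ring (the paper's conventions just above the theorem make ``Artinian'' and ``principal ideal ring'' two-sided notions, with no commutativity), and such a ring does \emph{not} decompose into a product of local rings. The correct decomposition (Asano--K\"othe) is into a finite product of full matrix rings $M_{n_i}(S_i)$ over local Artinian chain rings $S_i$; equivalently, these are the generalized uniserial (serial) rings. You then need an extra step: pass from $M_{n_i}(S_i)$ to $S_i$ by Morita equivalence and check that the indecomposable cyclic (uniserial) $S_i$-modules $S_i/\mathrm{rad}^k S_i$ correspond to cyclic $M_{n_i}(S_i)$-modules, since Morita equivalence does not preserve ``cyclic'' in general. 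Only after that does your height-function/Zorn argument in a local chain ring apply. As written, the reduction is wrong in the non-commutative case, which is exactly the case K\"othe's first assertion is about.

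For Part 2 the plan (reduce to local $(R,m,k)$, assume $\dim_k m/m^2\ge 2$, produce a finite-length module that is not a direct sum of cyclics, compare $\dim_k M/mM$ and composition lengths) is the standard outline, but the decisive step --- exhibiting a concrete such module and rigorously excluding every cyclic decomposition --- is only named, not carried out. That exclusion argument is precisely where K\"othe's and Cohen--Kaplansky's proofs do their work, so the sketch stops short of a proof.
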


Later Cohen and Kaplansky \cite{Coh-Kap} obtained the following result:

\begin{The}
\textup{(Cohen and Kaplansky).} If R is a commutative ring such that each R-module is a direct sum of cyclic modules, then $R$ must be an Artinian principal ideal ring.
\end{The}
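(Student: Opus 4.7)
The plan is to establish, in three stages, that $R$ is Noetherian, then Artinian, then a principal ideal ring; the third stage reduces via the standard decomposition of commutative Artinian rings to a local analysis.

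First I would show $R$ is Noetherian. Every ideal $I$ of $R$, being an $R$-module, decomposes as a direct sum of cyclic modules $I = \bigoplus_{\alpha} R/J_\alpha$. The goal is to show the index set must be finite; if it were infinite, one should be able to exhibit a countably generated $R$-module admitting no cyclic decomposition, contradicting the hypothesis. An alternative route is to verify that the hypothesis forces every direct sum of injective hulls of cyclic modules to remain injective, and then invoke the Papp--Faith--Walker theorem to conclude Noetherianness.

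Next I would show $R$ has Krull dimension zero, so that being commutative Noetherian it is Artinian. Suppose $P \subsetneq Q$ is a strict chain of primes. Pass to the domain $R/P$ and consider its fraction field $K = \mathrm{Frac}(R/P)$, which is a torsion-free $R/P$-module and (since $P$ annihilates $K$) inherits a cyclic decomposition $K = \bigoplus_\alpha (R/P)/J_\alpha$ from the original $R$-module decomposition. Torsion-freeness forces each $J_\alpha = 0$, so $K$ is free over $R/P$; but a nonzero free module over a domain that is not a field cannot be divisible, whereas $K$ is divisible. This contradiction shows that every prime of $R$ is maximal.

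A commutative Artinian ring decomposes canonically as $R \cong \prod_{i=1}^n R_i$ with each $R_i$ local Artinian, and the cyclic-decomposition hypothesis passes to each factor. Hence it suffices to show that a local Artinian ring $(R_i,\mathfrak{m}_i)$ satisfying the hypothesis has principal maximal ideal. If $\dim_{R_i/\mathfrak{m}_i}\mathfrak{m}_i/\mathfrak{m}_i^2 \geq 2$, then choosing $a,b \in \mathfrak{m}_i$ whose images are linearly independent modulo $\mathfrak{m}_i^2$, I would construct a finitely generated $R_i$-module of length at least two that is indecomposable but not cyclic (for example, by a pullback-type construction along the relations forced by $a$ and $b$), contradicting the hypothesis. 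With $\mathfrak{m}_i$ principal, $R_i$ is a chain ring and hence a principal ideal ring. The main obstacle I anticipate is this last construction: producing an explicit non-cyclic indecomposable over a local Artinian ring with two-dimensional cotangent space is the classical technical heart of the Cohen--Kaplansky argument, while establishing Noetherianness rigorously is a secondary but nontrivial step that depends on injective-module criteria beyond the literal hypothesis.
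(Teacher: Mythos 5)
The paper does not actually prove this theorem: it is stated as a classical background result and attributed to Cohen and Kaplansky's 1951 paper, so there is no internal proof to compare your argument against. I will therefore assess your proposal on its own terms.

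Your overall three-stage plan (Noetherian, then Krull dimension zero hence Artinian, then local analysis for the principal-ideal property) is the right shape, and Stage~2 is genuinely correct and cleanly argued: if $P \subsetneq Q$ were primes, then $K = \mathrm{Frac}(R/P)$ is an $R$-module annihilated by $P$, every nonzero cyclic summand of $K$ is torsion-free over $R/P$ and therefore isomorphic to $R/P$, so $K$ would be a nonzero free $R/P$-module; but $K$ is divisible over $R/P$, forcing $R/P$ to be a field, contradicting $P \subsetneq Q$. That step can be kept.

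The genuine gaps are Stages~1 and~3, and they are precisely the technical heart of the theorem, so the proposal as written does not constitute a proof. For Noetherianness, neither of your two routes is filled in. Decomposing an ideal $I = \bigoplus_\alpha R/J_\alpha$ and observing that the index set must be shown finite does not by itself produce a module with no cyclic decomposition; you would need an explicit construction (the standard one proceeds by assembling a module out of an infinite strictly ascending chain of ideals, or by a countable-support/uncountable-product argument), and none is given. The Faith--Walker route requires you to first prove that arbitrary direct sums of injective $R$-modules are injective, and it is not immediate how the cyclic-decomposition hypothesis yields this; you would have to supply an argument, not merely invoke the theorem. For Stage~3, you correctly reduce to a local Artinian ring $(R_i,\mathfrak{m}_i)$ and correctly identify that a non-cyclic indecomposable finitely generated module must be produced when $\dim_{R_i/\mathfrak{m}_i}\mathfrak{m}_i/\mathfrak{m}_i^2 \geq 2$, but you acknowledge that you do not have the construction. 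This is the step that actually does the work (the classical construction takes $a,b$ independent modulo $\mathfrak{m}_i^2$ and builds a two-generated indecomposable as a suitable quotient or pullback of $R_i \oplus R_i$, and verifying indecomposability is where the effort lies). Until these two constructions are supplied, the argument is a program rather than a proof.
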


However, finding the structure of non-commutative rings each of whose modules is a direct sum of cyclic modules is still an open question;  see \cite[Appendix B, Problem 2.48]{Sab} and  \cite[Question 15.8]{jain0} (for a partial solution, we refer \cite{B-G-M-S}).
Further,  Nakayama in \cite[Page 289]{Nak} gave an example of a non-commutative right Artinian ring $R$ where each right $R$-module is a direct um of cyclic modules but $R$ is not a principal right ideal ring.

Also, the problem of characterizing rings over which all  finitely generated modules are direct sums of cyclic modules (called FGC-rings) was first raised by I. Kaplansky \cite{kap}, \cite{Kap1} for the commutative setting. The complete characterization of commutative FGC rings is a deep result that was achieved in the 1970s. A paper by R. Wiegand and S. M. Wiegand \cite{Weig} and W. Brandal's book \cite{bran} are two sources from which to learn about this characterization.
The corresponding problem in the non-commutative case is still open; see \cite[Appendix B.  Problem 2.45]{Sab} (for a partial solution, we refer \cite{B-B1} and \cite{B-B2}).

 In this paper we say that an $R$-module  $M$ is  {\it virtually simple} if  $M\neq (0)$ and $N\cong M$ for every nonzero submodule $N$ of $M$ (i.e., up to isomorphism, $M$  is the only non-zero submodule of $M$).
 Clearly, we have the following  implications for  $_RM$:
\begin{center}
 $M$ is simple $\Rightarrow$ $M$ is virtually simple $\Rightarrow$  $M$ is cyclic
\end{center}
Note that these implications are irreversible in general when $R$ is not a division ring.

The above considerations motivated us to study rings for which every (finitely generated) module is a direct sum of virtually simple modules. Since  any injective virtually simple module is simple, so  each left  $R$-module is a direct sum of  virtually   simple modules if and only if  $R$ is semisimple (see   Proposition \ref{lem:quasi-injective} and Corollary \ref{cor:every vs}). Now the following  three interesting natural questions arise:

\begin{Que}
 Describe rings $R$ where   each  finitely generated  left  $R$-module is a    direct sum of  virtually simple modules.\vspace{-1mm}
\end{Que}

\begin{Que}
 Describe rings $R$ where  the  left   $R$-module  $R$  is    a direct sum of  virtually simple modules.\vspace{-1mm}
\end{Que}

 \begin{Que}
Whether the Krull-Schmidt Theorem holds for direct sums of  virtually simple modules?
\end{Que}

One goal of this paper is to answer the above questions.

        We note that a semisimple module is a type of module that can be understood easily from its parts. More precisely, a module $M$ is semisimple if and only if every submodule of $M$ is a direct summand.  This property motivates us to study  modules for which every submodule is isomorphic to a direct summand. In fact,  the notions of ``virtually semisimple modules'' and ``completely virtually semisimple modules''    were introduced and studied in our recent work  \cite{B-D-V} as  generalizations of semisimple modules. We recall that an $R$-module $M$ is  {\it virtually semisimple} if each submodule of $M$ is isomorphic to a direct summand of $M$. If each submodule of $M$ is a virtually semisimple module, we call $M$ {\it completely virtually semisimple}.
         We also have the following implications for  $_RM$:
\begin{center}
 $M$ is semisimple $\Rightarrow$ $M$ is completely virtually semisimple $\Rightarrow$ $M$ is virtually semisimple
\end{center}
 These implications are also irreversible in general  (see  \cite[Examples 3.7 and 3.8]{B-D-V}).

If $_RR$  (resp., $R_R$) is a virtually semisimple module, we then say that $R$ is a {\it left} (resp., {\it right}) {\it virtually semisimple ring}. A {\it left} (resp., {\it right})
{\it completely  virtually semisimple ring} is similarly defined (these notions are not left-right symmetric). In  \cite[Theorems 3.4 and 3.13]{B-D-V},   we gave several characterizations of left  (completely)  virtually semisimple rings.

 Clearly,   an $R$-module $M$ is   virtually simple if and only if $M$ is a non-zero indecomposable   virtually semisimple module.  We note that a semisimple module is a direct sum (finite or not) of simple modules,  but it is not true when we replace  ``semisimple"  by ``virtually semisimple" and   ``simple"  by ``virtually simple"  (see Example \ref{exa:important}).
 It is not hard to show that if  every left $R$-module is  virtually semisimple, then  $R$ is a semisimple ring. Nevertheless,  the Wedderburn-Artin theorem motivates us to study rings for which every finitely generated left (right)  module is a virtually semisimple module. In fact, the following  interesting natural questions arise:

 \begin{Que}
 Describe rings  where each  finitely generated  left  $R$-module is   completely  virtually semisimple.\vspace{-1mm}
\end{Que}

 \begin{Que}
 Describe rings  where each  finitely generated  left  $R$-module is    virtually semisimple.\vspace{-1mm}
\end{Que}

\begin{Que}
 Describe rings  where each cyclic  left  $R$-module is    virtually semisimple.
\end{Que}

Therefore, the second goal of this paper is to answer the above  questions,   however,  Question 1.9 remains open to discussion in the non-commutative case.

In Section 2,  we give two generalizations of the Wedderburn-Artin Theorem (Theorems \ref{the:maintheorem} and \ref{the:char fully}). Also, we prove a unique decomposition theorem for finite direct sum of virtually simple modules,  which is an  analogous to the classical Krull-Schmidt Theorem (Theorem \ref{the:Krull-esch}). Section 3
consists of some applications of these  theorems. Our version of the Krull-Schmidt Theorem  applies to prove that every finitely generated complectly virtually  semisimple  module   can be written ``uniquely" as a direct sum  of virtually simple modules  (see Proposition  \ref{pro: com vss is ds}).
Finally, as an important application, we give a  structure theorem for rings whose finitely generated left  (right)  $R$-modules are  direct  sums  of virtually simple modules (Proposition \ref{pro:charvs} and Theorem \ref{the:app1}).

Throughout this paper, all rings are associative with identity and all modules are unitary.  Any unexplained terminology and all the basic results on rings and modules that are used in the sequel can be found in \cite{Ful, Good2, Lam1, Wis}.

  \section{\bf Generalizations of Wedderburn-Artin  and  Krull-Schmidt Theorems}

Let $M$ and $N$  be two $R$-modules. The  notation $N\leq M$ (resp.,  $N\leq_{e} M$)  means that $N$ is a   submodule (resp., an essential submodule). We use the notation $M\hookrightarrow N$ to denote that $M$ embeds in $N$.  An {\it essential monomorphism}, denoted by  $M\overset{ess}\hookrightarrow N$,  from $M$ to $N$ is any monomorphism $f:M\longrightarrow N$ such that $f(M)\leq_e N$.  Also, we use the notation $E(_RM)$ for  the {\it injective hull} of  $M$.

  Following \cite{Good2}, we denote by ${\rm u.dim}(M)$ and ${\rm K.dim}(M)$ the
{\it uniform dimension} and {\it Krull dimension} of a module $M$, respectively.
If $\alpha \geq 0$ is an ordinal number then the module $M$ is said to be $\alpha$-critical provided
${\rm K.dim}(M) = \alpha$ while ${\rm K.dim}(M/N) < \alpha$ for all non-zero submodules $N$ of $M$. A module
is called {\it critical} if it is $\alpha$-critical for some ordinal $\alpha\geq 0$. It is known that critical modules are uniform (see \cite[Lemma 6.2.12]{Mcc}). We say that a
left ideal $P$ of a ring $R$ is {\it quasi-prime} if $P\neq R$ and, for ideals $A, B \subseteq R$, $AB \subseteq P \subseteq A\cap B$
implies that $A \subseteq P$ or $B \subseteq P$.

The following result is very useful  in our investigation.

\begin{Lem}\label{pro:property1} {\rm (See \cite[Proposition 2.7]{B-D-V}).} Let $M$ be a non-zero virtually
semisimple left $R$-module. Then;\vspace{2mm}\\
{\rm (i)}  The following conditions are equivalent.\vspace{1mm}\\
\indent {\rm (1)} ${\rm u.dim}(M)< \infty$.\vspace{1mm}\\
   \indent {\rm (2)} $M$ is finitely generated.\vspace{1mm}\\
\indent{\rm (3)} $M \cong R/P_1 \oplus \ldots \oplus R/P_n$
where $n\in\Bbb{N}$ and each $P_i$ is a quasi-prime left ideal
of $R$ \indent\indent such  that $R/P_i$ is a critical Noetherian
$R$-modules.\vspace{2mm}\\
{\rm (ii)} If $M$ is finitely generated, then  $M\cong  N$ for all $N \leq_{e} M$.\\
  \end{Lem}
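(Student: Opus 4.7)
My plan is to establish (i) via the cycle $(3) \Rightarrow (2) \Rightarrow (1) \Rightarrow (3)$ and then derive (ii) from (i). The first two implications are quick: $(3) \Rightarrow (2)$ is immediate from cyclicity of the factors, and $(2) \Rightarrow (1)$ follows by contrapositive---if $\mathrm{u.dim}(M) = \infty$ then $M$ contains an infinite direct sum $\bigoplus_{i \geq 1} N_i$ of non-zero submodules, which by virtual semisimplicity is isomorphic to a direct summand of $M$, hence finitely generated; but an infinite direct sum of non-zero modules is not finitely generated, a contradiction.

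The heart of the argument is $(1) \Rightarrow (3)$, which I would prove by induction on $n = \mathrm{u.dim}(M)$. First pick a uniform submodule $U$ of $M$ and a cyclic $Rx \leq U$, which is itself uniform. By virtual semisimplicity, $Rx$ is isomorphic to a direct summand $M_1$ of $M$. As a direct summand of the virtually semisimple $M$, the module $M_1$ is itself virtually semisimple; being uniform (hence indecomposable), every non-zero submodule of $M_1$ must be isomorphic to $M_1$ itself, i.e., $M_1$ is virtually simple. Write $M_1 \cong R/P_1$ and decompose $M = M_1 \oplus K$; the complement $K$ is virtually semisimple with $\mathrm{u.dim}(K) = n-1$, and the inductive hypothesis produces the desired decomposition.

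It remains to verify that each $R/P_i$ is critical Noetherian with $P_i$ quasi-prime. Noetherianness is straightforward: every non-zero submodule of $R/P_i$ is isomorphic to $R/P_i$ and hence cyclic, so for any ascending chain the (non-zero) union is cyclic and its generator already lies in some term of the chain, forcing stabilization. For quasi-primeness, suppose $A, B$ are ideals with $AB \subseteq P_i \subseteq A \cap B$ and $B \not\subseteq P_i$; then $B/P_i$ is a non-zero submodule of $R/P_i$, so $B/P_i \cong R/P_i$, and the identity $A \cdot (B/P_i) = (AB + P_i)/P_i = 0$ shows that $A$ annihilates $R/P_i$; since $\mathrm{ann}_R(R/P_i)$ is the largest two-sided ideal contained in $P_i$, this yields $A \subseteq P_i$. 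The critical property is what I expect to be the main obstacle: the standard formula $\mathrm{K.dim}(R/P_i) = \max(\mathrm{K.dim}(L/P_i), \mathrm{K.dim}(R/L))$ for $L \supsetneq P_i$, together with the self-similarity $L/P_i \cong R/P_i$, yields only the weak inequality $\mathrm{K.dim}(R/L) \leq \mathrm{K.dim}(R/P_i)$, and ruling out equality requires a more delicate argument exploiting the Noetherian and uniform structure of $R/P_i$ together with iterated use of the virtually simple property.

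Finally, (ii) is an easy corollary of (i): given $N \leq_{e} M$ with $M$ finitely generated, virtual semisimplicity supplies a decomposition $M = M_1 \oplus M_2$ with $N \cong M_1$; by (i) we have $\mathrm{u.dim}(M) < \infty$, and essentiality of $N$ gives $\mathrm{u.dim}(M_1) = \mathrm{u.dim}(N) = \mathrm{u.dim}(M)$, so additivity of uniform dimension on direct sums forces $\mathrm{u.dim}(M_2) = 0$, i.e., $M_2 = 0$, and therefore $M \cong N$.
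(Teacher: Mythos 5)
The paper itself gives no proof of this lemma---it is cited from \cite[Proposition 2.7]{B-D-V}---so your blind attempt has to be judged on its own merits. Your skeleton for part (i) and your derivation of (ii) are correct. The implications $(3)\Rightarrow(2)$ and $(2)\Rightarrow(1)$ (via the infinite-direct-sum argument) are fine, the Noetherian and quasi-prime verifications for $R/P_i$ are both correct, and your proof of (ii) from (i) via additivity of uniform dimension is exactly right. Two remarks, though. First, your induction in $(1)\Rightarrow(3)$ silently uses that a direct summand of a virtually semisimple module is virtually semisimple (you need it both for $M_1$ and for the complement $K$). This is \emph{not} obvious from the definition---if $N\leq A$ and $M=A\oplus B$, virtual semisimplicity of $M$ only gives $N$ isomorphic to a direct summand of $M$, not of $A$---and it is a substantive ingredient that has to be imported from the basic theory in \cite{B-D-V}; you should flag that you are using it.

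Second, and more seriously, you leave the criticality of $R/P_i$ genuinely open, and the reason you record for getting stuck (the bound $\mathrm{K.dim}(R/L)\leq\mathrm{K.dim}(R/P_i)$ coming from $L/P_i\cong R/P_i$ does not rule out equality) is precisely why a single application of the max formula cannot work. The missing idea is to \emph{iterate} the self-embedding and then invoke the recursive definition of Krull dimension. Write $V=R/P_i$, which is Noetherian (as you showed) and hence has a Krull dimension $\alpha=\mathrm{K.dim}(V)$. Given $0\neq L/P_i\leq V$, virtual simplicity supplies an isomorphism $\psi\colon V\to L/P_i\subsetneq V$, i.e.\ an injective non-surjective endomorphism of $V$. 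This yields a strictly descending chain $V\supsetneq\psi(V)\supsetneq\psi^2(V)\supsetneq\cdots$ in which every factor $\psi^k(V)/\psi^{k+1}(V)\cong V/\psi(V)\cong R/L$. Since $\mathrm{K.dim}(V)\leq\alpha$, by definition all but finitely many of these factors must have Krull dimension strictly less than $\alpha$; as they are all isomorphic, this forces $\mathrm{K.dim}(R/L)<\alpha$ for every $L\supsetneq P_i$, which is exactly $\alpha$-criticality. With that insertion the proof is complete.
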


\begin{Pro}\label{lem:quasi-injective}
Every quasi-injective virtually semisimple module $M$ is semisimple.
\end{Pro}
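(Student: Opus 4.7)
The plan is to show directly that every submodule of $M$ is a direct summand of $M$, using virtual semisimplicity to produce a candidate complement via an isomorphism, and quasi-injectivity to turn that external isomorphism into a global projection.

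Concretely, fix a submodule $N \leq M$. By virtual semisimplicity there exists a direct summand $K$ of $M$ and an $R$-isomorphism $f\colon N \to K$; write $M = K \oplus K'$ with projection $\pi\colon M \twoheadrightarrow K$. I would then view $f$ as a homomorphism $N \to M$ (composing with the inclusion $K \hookrightarrow M$) and invoke quasi-injectivity to extend it to some $\widetilde{f} \in \mathrm{End}_R(M)$ with $\widetilde{f}|_N = f$. Setting $h = \pi \circ \widetilde{f} \in \mathrm{End}_R(M)$, we have $h|_N = f$, an isomorphism $N \xrightarrow{\sim} K$.

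From this last fact the decomposition $M = N \oplus \ker(h)$ follows from two quick checks: $N \cap \ker(h) = 0$ because $h|_N$ is injective, and $N + \ker(h) = M$ because for any $y \in M$ the element $h(y) \in K$ has the form $h(n)$ for a unique $n \in N$ (by surjectivity of $h|_N$), whence $y = n + (y-n)$ with $y - n \in \ker(h)$. Thus $N$ is a direct summand of $M$, and since $N$ was arbitrary, $M$ is semisimple.

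I do not anticipate a real obstacle here; the only subtlety is the conceptual one of realizing that quasi-injectivity is precisely what converts the abstract isomorphism $N \cong K$ supplied by virtual semisimplicity into an actual splitting of $N \hookrightarrow M$. No finiteness hypothesis on $M$ is needed, so Lemma~\ref{pro:property1} is not required for this argument.
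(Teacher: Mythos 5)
Your proof is correct, and the overall strategy coincides with the paper's: take $N\le M$, use virtual semisimplicity to write $M=K\oplus L$ with an isomorphism $N\cong K$, and then use quasi-injectivity to convert this abstract isomorphism onto a direct summand into an actual splitting of $N\hookrightarrow M$. The difference is in how the second half is carried out. The paper argues at the level of relative injectivity: a direct summand of a quasi-injective module is $M$-injective, $M$-injectivity is an isomorphism invariant so $N$ is $M$-injective, and an $M$-injective submodule is a direct summand. You instead unwind these facts by hand: extend $N\xrightarrow{\,f\,}K\hookrightarrow M$ to $\widetilde f\in \mathrm{End}_R(M)$, set $h=\pi\circ\widetilde f$, and verify $M=N\oplus\ker h$ directly from $h|_N=f$ being an isomorphism onto $K$. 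Your version is more self-contained (it does not cite the standard closure properties of $M$-injectivity), at the cost of a few extra lines; the paper's is terser because it leans on those standard facts. You are also right that no finiteness hypothesis is needed, so Lemma~\ref{pro:property1} is irrelevant here — the paper's proof likewise makes no use of it.
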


\begin{proof}
Assume that $N\leq M$. By  the assumption, $M=K\oplus L$ where $K\cong N$ and $K,L\leq M$. Since   $K$ is a direct summand of $M$, so $K$ is  $M$-injective and so $N$ is $M$-injective. It follows that $N$ is a direct summand of $M$. Thus  $M$ is a semisimple module.
\end{proof}

\begin{Cor}\label{cor:every vs}
The following conditions are equivalent for a ring $R$.\vspace{2mm}\\
{\rm (1)} Every left (right) $R$-module is a direct sum of virtually simple module.\vspace{1mm}\\
{\rm (2)} Every left (right)  $R$-module is virtually semisimple.\vspace{1mm}\\
 {\rm (3)} $R$ is a semisimple ring.
\end{Cor}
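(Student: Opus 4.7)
My plan is to establish the equivalences by running the cycle (3) $\Rightarrow$ (1), (2) $\Rightarrow$ (3), with Proposition \ref{lem:quasi-injective} together with the injective hull construction doing all the real work for the reverse directions. The implications (3) $\Rightarrow$ (1) and (3) $\Rightarrow$ (2) need nothing beyond the classical theory of semisimple rings: over a semisimple ring every module is a direct sum of simple modules (which are trivially virtually simple) and every submodule is a direct summand (so every module is semisimple, hence virtually semisimple).

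For (2) $\Rightarrow$ (3), I pick an arbitrary left $R$-module $M$ and pass to its injective hull $E(M)$. Since $E(M)$ is injective it is quasi-injective, and by hypothesis it is virtually semisimple, so Proposition \ref{lem:quasi-injective} forces $E(M)$ to be semisimple. Since $M$ embeds in $E(M)$ and submodules of semisimple modules are semisimple, $M$ itself is semisimple. As every left $R$-module is then semisimple, $R$ is a semisimple ring.

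For (1) $\Rightarrow$ (3), I use the same strategy, but I must first reduce virtually simple modules to simple ones. Given $M$, apply (1) to write $E(M)=\bigoplus_{i} V_i$ with each $V_i$ virtually simple. Each $V_i$ is a direct summand of the injective module $E(M)$, hence injective and therefore quasi-injective. Since any virtually simple module is obviously virtually semisimple (its only submodules are $0$, which is a direct summand, and copies of itself, and $M$ is a direct summand of $M$), Proposition \ref{lem:quasi-injective} yields that each $V_i$ is semisimple. A semisimple virtually simple module must be simple: it contains a nonzero simple submodule $S$, and virtual simplicity gives $S\cong V_i$. Thus $E(M)$ is a direct sum of simple modules, so $E(M)$ and then $M$ are semisimple, and $R$ is semisimple.

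The only mildly non-routine step is the reduction from virtually simple injective summands to simple ones in (1) $\Rightarrow$ (3); this is where the two hypotheses differ in texture, since (2) delivers a virtually semisimple module directly to Proposition \ref{lem:quasi-injective}, whereas (1) only supplies one after splitting off an injective summand. The left/right symmetry of the statement is automatic because (3) is itself left/right symmetric.
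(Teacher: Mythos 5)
Your proof is correct and follows essentially the same route as the paper: the implications from (3) are classical, and both reverse implications pass to an injective hull and invoke Proposition~\ref{lem:quasi-injective} applied to the injective (hence quasi-injective) virtually simple summands. The only cosmetic differences are that the paper works with $E(_RR)$ alone rather than $E(M)$ for every $M$, and that you make explicit the step that a semisimple virtually simple module is simple, which the paper leaves implicit when it concludes $E(_RR)$ is semisimple.
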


\begin{proof}
 $(1) \Rightarrow (3)$. By assumption,   $E(_RR)$ is a direct sum of injective virtually simple $R$-module. Since every injective module is quasi-injective, so by Proposition \ref{lem:quasi-injective}, $E(_RR)$ is a semisimple $R$-module and hence $R$ is semisimple.

\noindent  $(2) \Rightarrow (3)$ can be proven by a similar way.

\noindent  $(3) \Rightarrow (1)$ and $(3) \Rightarrow (2)$ are evident.
\end{proof}

We recall that  the {\it singular submodule}   ${\rm Z}(M)$   of a left (resp., right)  $R$-module $M$ consisting of elements
whose annihilators are essential left (resp., right)  ideals in $R$. An $R$-module $M$ is called a {\it singular} (resp., {\it non-singular}) {\it module} if
${\rm Z}(M)=M$ (resp., ${\rm Z}(M)=0$).

\begin{Lem}\label{pro:zarb}
Let $R_1$ and $R_2$ be rings and $T=R_1\oplus R_2$. Let ``P" denote any one of the properties: finitely generated, singular,
non-singular, projective, injective, semisimple, virtually semisimple and virtually simple. Then by the natural multiplication  every left $T$-module
 $M$ has the form $M_1\oplus M_2$ where each $M_i$ is a left $R_i$-module and the $T$-module $M$  satisfies the property ``P" if and only if each $R_i$-module $M_i$ satisfies the property.
\end{Lem}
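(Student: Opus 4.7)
The plan is to build the decomposition from the two orthogonal central idempotents of $T$ and then reduce every listed property to the induced bijection between $T$-submodules and pairs of $R_i$-submodules. Set $e_1 = (1_{R_1}, 0)$ and $e_2 = (0, 1_{R_2})$ in $T$. Since $e_1 + e_2 = 1_T$, $e_i^2 = e_i$, $e_1 e_2 = 0$, and both are central, one has $M = e_1 M \oplus e_2 M$ for every left $T$-module $M$. Identifying $R_i$ with $e_i T$ via $r \mapsto r e_i$, the set $M_i := e_i M$ inherits a left $R_i$-module structure, and the original $T$-action on $M$ coincides with the diagonal action on $M_1 \oplus M_2$. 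The key structural observation is that every $T$-submodule $N \leq M$ satisfies $N = e_1 N \oplus e_2 N$ with $e_i N$ an $R_i$-submodule of $M_i$, and conversely every pair $(N_1, N_2)$ of $R_i$-submodules yields a $T$-submodule $N_1 \oplus N_2$. Likewise, $T$-homomorphisms between $T$-modules split as pairs of $R_i$-homomorphisms.

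With this correspondence in hand, I would verify the properties one at a time. \emph{Finitely generated}: apply $e_i$ to a $T$-generating set to get an $R_i$-generating set, and conversely take the union of lifted $R_i$-generating sets. \emph{Singular/non-singular}: left ideals of $T$ decompose as $I_1 \oplus I_2$, with $I$ essential in $T$ iff each $I_i$ is essential in $R_i$; consequently $\mathrm{Z}_T(M) = \mathrm{Z}_{R_1}(M_1) \oplus \mathrm{Z}_{R_2}(M_2)$. \emph{Projective} and \emph{injective}: these follow either by transporting the lifting/Baer diagrams through the submodule correspondence, or more conceptually from the natural isomorphism $\mathrm{Hom}_T(M, -) \cong \mathrm{Hom}_{R_1}(M_1, -) \times \mathrm{Hom}_{R_2}(M_2, -)$, which preserves exactness in each slot. \emph{Semisimple} and \emph{virtually semisimple}: directly from the submodule correspondence, since a $T$-submodule $N_1 \oplus N_2$ is a direct summand of $M_1 \oplus M_2$ iff each $N_i$ is a direct summand of $M_i$, and analogously an isomorphism $N \cong K$ of $T$-modules restricts to isomorphisms $e_i N \cong e_i K$ of $R_i$-modules (and conversely pairs of isomorphisms glue).

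The only mildly delicate case is \emph{virtually simple}. If $M$ is virtually simple as a $T$-module and both $M_1$ and $M_2$ are non-zero, then $M_1 \oplus 0$ is a non-zero proper $T$-submodule which cannot be isomorphic to $M$ (any $T$-isomorphism must respect both components, yet $e_2 (M_1 \oplus 0) = 0 \neq e_2 M$); this forces $M_j = 0$ for some $j$, so $M$ coincides with $M_i$ for $i \neq j$, and virtual simplicity of $M$ over $T$ is plainly equivalent to virtual simplicity of $M_i$ over $R_i$. Conversely, if one $M_j$ is zero and the other $M_i$ is virtually simple over $R_i$, then the submodule correspondence immediately gives that every non-zero $T$-submodule of $M$ is isomorphic to $M$.

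I do not foresee a genuine obstacle here: the lemma is essentially a bookkeeping statement about the equivalence of categories $T\text{-Mod} \simeq R_1\text{-Mod} \times R_2\text{-Mod}$ induced by the central idempotents, and the eight properties listed are all preserved under this equivalence. The only point requiring a moment's thought is the indecomposability hidden in \emph{virtually simple}, which forces one of the two summands to vanish.
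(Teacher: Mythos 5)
Your proof follows the same strategy as the paper's: decompose $M$ via the central orthogonal idempotents $e_1,e_2$ of $T$, establish the correspondence between $T$-submodules of $M$ and pairs of $R_i$-submodules of $M_i$ (together with the analogous splitting of $T$-homomorphisms), and then check each listed property through that correspondence (Baer's criterion for injectivity, the decomposition $\mathrm{Z}(_TM)=\mathrm{Z}(_{R_1}M_1)\oplus \mathrm{Z}(_{R_2}M_2)$ for singularity, and so on). Your careful handling of the virtually simple case is worth highlighting, as it exposes a small imprecision in the lemma as literally stated: if $M$ is virtually simple over $T$ then, as you show, exactly one of $M_1,M_2$ must vanish, and the zero module is \emph{not} virtually simple by the paper's own definition, so ``each $M_i$ satisfies the property'' cannot hold verbatim for that property. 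The version you actually prove --- $M$ is virtually simple over $T$ if and only if one $M_i$ is zero and the other is virtually simple over $R_i$ --- is the correct statement and is precisely what the paper uses downstream when it reduces to $k=1$ in the proof of Theorem \ref{the:maintheorem}.
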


\begin{proof}
  For the first part, we just note that $T$ has two central orthogonal idempotent elements $e_1$ and $e_2$ with  $e_1+e_2=1_T$ and $Te_i=R_i$. Thus if $M$ is a left $T$-module then $M=e_1M\oplus e_2M$ where each $e_iM$ is a left  $R_i$-module. Set  $M_i=e_iM$ ($i=1,2$). In this situation any submodule of $M_i$ has the form $e_iK$ for some $K \leq {_TM}$. Thus the proof in the injectivity  case is easily obtained by Baer injective test. For the other cases  there are routine arguments  by using  Soc$(_TM)=$ Soc$(_{R_1}M_1) \oplus $ Soc$(_{R_2}M_2)$,  Z$(_TM)=$ Z$(_{R_1}M_1) \oplus$ Z$(_{R_2}M_2)$ and by the fact that  if $X\oplus Y\cong M$ is an isomorphism of left $T$-modules then $e_iX\oplus e_iY \cong e_iM$   is an isomorphism of left $R_i$-modules.
\end{proof}

\begin{Lem}\label{lem:nonisovs} Let $M$ and $N$ be  virtually simple  $R$-modules. Then;\vspace{2mm}\\
{\rm (i)}  $M$ is a  cyclic  critical   Noetherian  uniform $R$-module.\vspace{1mm}\\
{\rm (ii)}  If $M$ and $N$ are virtually simple $R$-modules with Hom$_R(M,N)=0$ then $M\cong N$ or \indent Z$(N)=N$.\vspace{1mm}\\
{\rm (iii)} If  $M\not\cong N$ and  $N$ is projective, then ${\rm Hom}_R(M,N)=0$.
\end{Lem}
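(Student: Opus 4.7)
For (i), my first move would be to show $M$ is cyclic: any non-zero $x \in M$ gives a non-zero cyclic submodule $Rx$, which is $\cong M$ by virtual simplicity, so $M$ itself is cyclic (hence finitely generated). Moreover $M$ is virtually semisimple, since its only submodules are $0$ and modules $\cong M$, and $M$ is trivially a direct summand of itself. Applying Lemma \ref{pro:property1}(i), one obtains $M \cong R/P_1 \oplus \cdots \oplus R/P_n$ with each $R/P_i$ critical Noetherian. Each $R/P_i$ is a non-zero submodule of $M$ and therefore $\cong M$ by virtual simplicity; since critical modules are uniform and hence indecomposable, the isomorphism $R/P_1 \cong R/P_1 \oplus \cdots \oplus R/P_n$ forces $n=1$. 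Thus $M \cong R/P$ is cyclic, critical, Noetherian, and uniform.

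For (ii), I read the hypothesis as ${\rm Hom}_R(M,N) \neq 0$, since as printed the implication is vacuous. Pick a non-zero $f \colon M \to N$. Then $f(M)$ is a non-zero submodule of the virtually simple $N$, so $f(M) \cong N$. If $\ker f = 0$ then $M \cong f(M) \cong N$. Otherwise $\ker f$ is a non-zero submodule of $M$, which is uniform by (i), so $\ker f \leq_e M$; the quotient of a module by an essential submodule is standardly singular (for $x \in M$ and any non-zero left ideal $J$, essentiality applied to $Jx$ produces an element of $J$ mapping $x$ into $\ker f$, unless $Jx = 0$ and $J$ already annihilates $x$), so $N \cong M/\ker f$ satisfies ${\rm Z}(N) = N$.

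For (iii), by (i) the module $N$ is cyclic, say $N = Rn$, and projectivity splits the surjection $R \to N$ sending $1 \mapsto n$, so $N \cong Re$ for some idempotent $e \in R$. Since $N \neq 0$ we have $e \neq 0$, and $Re \cap R(1-e) = 0$ shows the annihilator $R(1-e)$ of $e$ is not essential in $R$; hence $e \notin {\rm Z}(Re)$ and ${\rm Z}(N) \neq N$. Combined with $M \not\cong N$, the contrapositive of (ii) yields ${\rm Hom}_R(M,N) = 0$. The main obstacle I anticipate is the indecomposability step inside (i): virtual simplicity alone does not rule out $M \cong M \oplus (\text{non-zero})$, and the key move is invoking Lemma \ref{pro:property1} together with uniformness (hence indecomposability) of critical modules; once uniformness of $M$ is secured, parts (ii) and (iii) are essentially formal.
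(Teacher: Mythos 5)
Your proof is correct and follows essentially the same route as the paper's: (i) via Lemma~\ref{pro:property1}(i) plus uniformity of critical modules, (ii) by analyzing the kernel of a nonzero homomorphism, and (iii) by combining (ii) with the non-singularity of a nonzero cyclic projective module (your explicit $N\cong Re$ argument is a concrete instance of the paper's appeal to ``projective modules are not singular''). You also correctly identified that the hypothesis in (ii) is a typo for ${\rm Hom}_R(M,N)\neq 0$, and your justification that $n=1$ in (i) --- using indecomposability of the critical summand --- supplies a small step the paper leaves implicit.
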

\begin{proof}
(i) Let $M$ be a virtually semisimple module. Clearly $M$ is cyclic and hence $M\cong R/P_i$ for some $i$
  as stated in Lemma \ref{pro:property1}(i). The last statement is now true because critical modules are uniform.

\noindent (ii) Suppose that $_RM$ and $_RN$ are virtually simple and $0\neq f \in$Hom$_R(M,N)$. We have $M/$Ker$f \cong$Im$f\cong N$. Now if Ker$f=0$, then $M\cong N$ and if
Ker$f\neq 0$, then Ker$f \leq_e M$ because $_RM$ is uniform by (i). Hence $N$ must be singular.

\noindent  (iii) By (ii) and the fact that projective modules are not singular.
\end{proof}

Next we have the following lemma.

\begin{Lem}\label{lem:endoplid} Let $M$ be a projective virtually simple $R$-module. Then the
endomorphism ring    ${\rm End}_R(M)$ is a principal left ideal domain.
\end{Lem}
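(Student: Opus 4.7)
The plan is to prove the lemma in two stages. Stage one: show $S := \operatorname{End}_R(M)$ has no zero divisors. Stage two: show every nonzero left ideal of $S$ is principal. Both stages rely on three features of $M$: virtual simplicity, projectivity (hence non-singularity), and being uniform and Noetherian---the last via Lemma~\ref{lem:nonisovs}(i).

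For stage one, I would show every nonzero $f \in S$ is injective. Given $f \neq 0$, its image $f(M)$ is a nonzero submodule of $M$, so virtual simplicity gives $f(M) \cong M$, which is projective and thus non-singular. If $\ker f$ were nonzero, the uniformity of $M$ would force $\ker f \leq_e M$, making $f(M) \cong M/\ker f$ singular---a contradiction. So every nonzero endomorphism of $M$ is injective, and since compositions of injections are injective, $S$ is a domain.

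For stage two, let $I$ be a nonzero left ideal of $S$, and set $N := \sum_{f \in I} f(M) \leq M$. Then $N \neq 0$, and by virtual simplicity $N \cong M$. Noetherianness of $M$ reduces this sum to a finite one: $N = f_1(M) + \cdots + f_n(M)$ for some $f_1, \ldots, f_n \in I$. The $R$-epimorphism $\psi : M^n \to N$ sending $(m_i)$ to $\sum f_i(m_i)$ splits, since $N \cong M$ is projective; thus there exist $j_1, \ldots, j_n : N \to M$ with $\sum f_i j_i = \mathrm{id}_N$. Fix an isomorphism $\alpha : M \xrightarrow{\sim} N$ and the inclusion $\iota : N \hookrightarrow M$, and set $g := \iota \alpha$ and $t_i := j_i \alpha$ in $S$. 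A direct computation yields $g(m) = \sum f_i(t_i(m))$ for every $m$, so $g$ is a sum of compositions $f_i \circ t_i$, each of which lies in the left ideal $Sf_i \subseteq I$ (using the convention in which $M$ is naturally a right $S$-module). Hence $g \in I$, and $g(M) = N$ with $g$ injective. For any $h \in I$, $h(M) \subseteq N = g(M)$, so injectivity of $g$ makes $s := g^{-1} \circ h$ a well-defined element of $S$ with $h = sg$, giving $h \in Sg$. Therefore $I = Sg$.

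The main obstacle is showing $g \in I$: virtual simplicity supplies only the abstract isomorphism $N \cong M$, and realizing it as an actual element of the left ideal $I$ requires splitting $\psi$, which is precisely where projectivity of $M$ enters. Noetherianness plays the auxiliary role of truncating the sum defining $N$ to finitely many generators so that the splitting applies.
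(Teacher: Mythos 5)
Your proof is correct, and it takes a genuinely different route from the paper's. The paper dispatches this lemma with a bare citation to two external facts --- Gordon--Robson's result on endomorphism rings of critical compressible modules together with Theorem~2.9 of the authors' companion paper --- whereas you give a self-contained, first-principles argument. Your stage one is the standard argument that the endomorphism ring of a uniform non-singular module is a domain; your stage two is essentially a trace-ideal computation: projectivity of $N\cong M$ splits the multiplication map $M^{n}\to N$ built from finitely many $f_i\in I$ (finiteness coming from Noetherianness, via Lemma~\ref{lem:nonisovs}(i)), and the splitting manufactures an injective $g\in I$ with $g(M)=N$, after which $I=Sg$ falls out because $h(M)\subseteq g(M)$ for every $h\in I$. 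What the paper's approach buys is brevity; what yours buys is transparency about exactly which of the three hypotheses (virtual simplicity, projectivity, finite uniform dimension/Noetherianness) does what work. Two small points worth flagging if you polish this: (a) the step ``$M$ projective hence non-singular'' is safe here because $M$ is \emph{cyclic} projective, hence of the form $Re$ for an idempotent $e$, and $\mathrm{l.ann}_R(e)\cap Re=0$ shows $Re$ cannot be singular unless zero --- the blanket claim that arbitrary projectives are non-singular is more delicate, so it is cleaner to route through cyclicity (the paper itself relies on this same fact, tacitly, in Lemma~\ref{lem:nonisovs}(iii)); and (b) you correctly note that the multiplication on $S=\mathrm{End}_R(M)$ must be taken so that $M$ is a right $S$-module, i.e.\ $st=t\circ s$, which is the convention under which the paper's isomorphism $R\cong\mathrm{End}_R({}_RR)$ in Theorem~\ref{the:maintheorem} holds, so your ``left'' matches their ``left.''
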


\begin{proof}
This follows from  \cite [Corollary 2.8]{Gor} and  \cite [Theorem 2.9] {B-D-V}.
\end{proof}

 Being a left virtually  semisimple ring is not    Morita invariant (see \cite[Example 3.8]{B-D-V}). Surprisingly,
 being a left  completely virtually  semisimple ring is  a Morita invariant property (see
 \cite[Proposition 3.3]{B-D-V}). In addition,  being (completely) virtually semisimple module is a Morita invariant property (see \cite[Proposition 2.1 (iv)]{B-D-V}). For an $R$-module $M$ and each $n \in \Bbb{N}$, we use the notation  $M^{(n)}$ instead of $M\oplus \cdots \oplus M$ ($n$ times).

We are now in a position to give the following  generalization of the Wedderburn-Arttin Theorem. We remark  that  the equivalences between $(2)$ and $(3)$   below  has been shown in  \cite[Theorem 3.13]{B-D-V}.

\begin{The}\label{the:maintheorem}
 {\bf (First generalization of  the  Wedderburn-Artin Theorem)} The following statements are equivalent for a ring $R$.\vspace{2mm}\\
{\rm (1)}  The  left   $R$-module  $R$   is    a direct sum of  virtually simple modules.\vspace{1mm}\\
{\rm (2)} $R$ is a left   completely virtually semisimple ring.\vspace{1mm}\\
{\rm (3)}  $R \cong \prod _{i=1}^ k M_{n_i}(D_i)$ where $k, n_1, ...,n_k\in \Bbb{N}$ and each $D_i$ is a principal left ideal domain.

 Moreover, in the statement {\rm (3)},  the integers $k,~ n_1, ...,n_k$ and the principal left
ideal domains $D_1, ...,D_k$ are uniquely determined (up to isomorphism) by $R$.
\end{The}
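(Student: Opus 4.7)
The plan is to complete the cycle by establishing $(2)\Rightarrow(1)$ and $(1)\Rightarrow(3)$, since the equivalence $(2)\Leftrightarrow(3)$ is already recorded as \cite[Theorem 3.13]{B-D-V}. The uniqueness claim will then be extracted from the structure of the resulting decomposition.

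For $(2)\Rightarrow(1)$: under the assumption, $_RR$ is a finitely generated virtually semisimple module, so Lemma~\ref{pro:property1}(i) yields a decomposition $_RR \cong R/P_1\oplus\cdots\oplus R/P_n$ in which each $R/P_i$ is critical Noetherian. Because $R/P_i$ is isomorphic to a direct summand, and hence to a submodule, of $_RR$, it inherits virtual semisimplicity from the complete virtual semisimplicity hypothesis. Being critical, $R/P_i$ is uniform and therefore indecomposable. A virtually semisimple indecomposable module must be virtually simple, since every nonzero submodule embeds as a direct summand and the only nonzero direct summand is the whole module. Thus $_RR$ is a finite direct sum of virtually simple modules, as required.

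For $(1)\Rightarrow(3)$: write $_RR = V_1 \oplus \cdots \oplus V_m$ with each $V_i$ virtually simple (the sum is finite because $1$ has only finitely many nonzero components), and group the summands by isomorphism type to obtain $_RR \cong W_1^{(n_1)}\oplus\cdots\oplus W_k^{(n_k)}$ with the $W_i$ pairwise non-isomorphic. Each $W_i$ is a direct summand of $R$, hence projective, so Lemma~\ref{lem:nonisovs}(iii) gives $\mathrm{Hom}_R(W_i,W_j)=0$ for $i\neq j$. Fixing the convention under which $\mathrm{End}_R({_RR})\cong R$ as rings (via right multiplication), the orthogonality among the $W_i$ yields
\[
R \;\cong\; \prod_{i=1}^k \mathrm{End}_R\!\left(W_i^{(n_i)}\right) \;\cong\; \prod_{i=1}^k M_{n_i}(D_i),
\]
where $D_i := \mathrm{End}_R(W_i)$; each $D_i$ is a principal left ideal domain by Lemma~\ref{lem:endoplid}. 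For uniqueness, each factor $M_{n_i}(D_i)$ is a prime ring (as $D_i$ is a domain), hence ring-indecomposable, so $k$ and the list of factors are canonically recovered as the minimal central idempotent blocks of $R$. Within a single block $A = M_n(D)$, the integer $n$ is recovered as the uniform dimension of $_AA$ (since $_AA$ splits as $n$ uniform column summands, each uniform because $_DD$ is uniform over the left Ore domain $D$), and $D$ is recovered up to isomorphism as the endomorphism ring of such a column summand.

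The main obstacle I anticipate is the careful bookkeeping of the anti-isomorphism between $R$ and $\mathrm{End}_R({_RR})$: Lemma~\ref{lem:endoplid} asserts that $\mathrm{End}_R(W_i)$ is a \emph{principal left} ideal domain, and one must ensure that, under the chosen convention, the ring $D_i$ that actually appears in the product in $(3)$ remains a PLID rather than the opposite ring; this is a convention choice that must be made explicit at the outset. A secondary subtlety is that the uniqueness step cannot invoke the classical Wedderburn--Artin uniqueness statement, because $D_i$ need not be a division ring, and instead relies on the uniform dimension and the endomorphism ring of a column summand within each prime Goldie factor.
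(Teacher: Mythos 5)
Your proposal is correct, and the core of $(1)\Rightarrow(3)$ is essentially the paper's argument (group the virtually simple summands by isomorphism type, apply Lemma~\ref{lem:nonisovs}(iii) for the vanishing of Hom between distinct types, and Lemma~\ref{lem:endoplid} for the endomorphism ring), but the route through the rest of the cycle differs. The paper closes the equivalence by proving $(3)\Rightarrow(1)$ directly: after reducing to $k=1$ via Lemma~\ref{pro:zarb}, it uses the Morita equivalence $D\approx M_n(D)$ and exhibits the column ideals $N_j\cong D^{(n)}$ as explicit virtually simple summands of $_RR$. You instead prove $(2)\Rightarrow(1)$ directly, decomposing $_RR$ via Lemma~\ref{pro:property1}(i) into critical pieces $R/P_i$, invoking complete virtual semisimplicity to see each $R/P_i$ is virtually semisimple, and then using the observation that an indecomposable nonzero virtually semisimple module is virtually simple; $(3)\Rightarrow(1)$ then comes for free from $(3)\Rightarrow(2)\Rightarrow(1)$. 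Both cycles are valid. The paper's constructive $(3)\Rightarrow(1)$ tells you concretely what the virtually simple summands look like, whereas your $(2)\Rightarrow(1)$ step is a short structural argument that reads directly off Lemma~\ref{pro:property1} and does not require Morita machinery. For the uniqueness statement, the paper simply cites \cite[Theorem 3.13]{B-D-V}, whereas you sketch an independent argument (primitive central idempotent blocks of a product of prime rings, uniform dimension of a block, endomorphism ring of a uniform summand); your sketch is sound, with the caveat you already flagged that one must be careful that the endomorphism ring of a uniform column summand is recovered as $D$ rather than $D^{\mathrm{op}}$ under a single fixed side-convention, a point the paper elides as well.
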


\begin{proof}
$(1) \Rightarrow (3)$.  By assumption,   $R=I_1\oplus \cdots \oplus I_n$ where $n\in\Bbb{N}$ and each $I_i$ is a (projective) virtually simple $R$-module. Grouping these according to their isomorphism types as left $R$-modules, so we
can assume that  $R=I_1^{(n_1)} \oplus \cdots \oplus I_k^{(n_k)}$ where  $1\leq k\leq n$ and $I_i\ncong I_j$
 for any pair $i\neq j$.  Thus, $R\cong {\rm End}_R(R)={\rm End}_R\big(I_1^{(n_1)} \oplus \cdots \oplus I_k^{(n_k)}\big)$.
  Also, by Lemma \ref{lem:nonisovs}(iii), we have ${\rm Hom}_R(I_s,I_t)=0$ for every $s\neq t$. It follows that
$R\cong \bigoplus_{i=1}^k$End$_R\big(I_i^{(n_i)}\big)\cong \bigoplus_{i=1}^k M_{n_i}\big($End$_R(I_i)\big)$. Now
By Lemma \ref{lem:endoplid}, the endomorphism ring $D_j:={\rm End}_R(I_j)$ is a principal left ideal domain for each $1\leq j\leq k$. This shows that
$R \cong \prod _{i=1}^k  M_{n_i}(D_i)$, and the proof is complete.

 \noindent $(3) \Rightarrow (1)$. By Lemma \ref{pro:zarb}, we can assume that $k=1$, i.e., $R=M_n(D)$ where $n\in\Bbb{N}$ and $D$ is a principal ideal domain.   It is known that $R$ is Morita equivalent to $D$ (see for instance  \cite[Page 525]{Lam1}).
  Let $D\overset{\cal F}\approx R$. Then ${\cal F} (D)=D^{(n)}$ is a virtually semisimple $R$-module because $_DD$ is virtually semisimple.
 We set
$$
N_j = \bordermatrix{~ &   &&&j{\rm -th}&& \cr
                   &0&\cdots &0& D&0&\cdots&0 \cr
                  &0&\cdots &0& D&0&\cdots&0 \cr
                  &\vdots&\ddots&\vdots&\vdots&\vdots&\ddots&\vdots& \cr
                   &0&\cdots &0& D&0&\cdots&0 \cr}
                   ~~~~~~~(1\leq j \leq n).
 $$
\indent Then by   the natural matrix multiplication, $N_j$ is a left $R$-module with $N_j\cong D^{(n)}$. Thus each  $N_j$ is virtually simple, i.e.,  $R$ is a direct sum of virtually simple left $R$-modules.

 \noindent $(2) \Leftrightarrow (3)$ and the {\it ``moreover statement"}  are by  \cite[Theorem 3.13]{B-D-V}.
\end{proof}

A ring $R$ is called a {\it left} (resp., {\it right}) V-{\it ring} if each simple left (resp., right) $R$-module is
 injective. We say that $R$ is V-{\it ring} if it is both left and right V-ring.

\begin{Rem}\label{lem:PLID PRID v-dom}
{\rm  Although there
exists an example of a non-domain which is a left $V$-ring but not a right $V$-ring, the question whether
a left (right) V-domain is a right (left) V-domain remains open in general. See \cite[Corollary 3.3]{jain1} where
the authors proved that the answer is positive for principal ideal domains.
}
\end{Rem}

We need the following proposition.

\begin{Pro}\label{cor:SS+pro}
Let  $R=\prod_{i=1}^{k} M_{n_i}(D_i)$ where $k, n_1,\ldots,n_k\in\Bbb{N}$ and
each $D_i$ is a  principal ideal V-domain. Then every finitely generated left  $R$-module is a direct sum of a projective module   and a singular (injective) semisimple module.
\end{Pro}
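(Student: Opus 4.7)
The plan is to reduce to the case of a single principal ideal V-domain and then invoke the structure theorem for finitely generated modules over a PID. By iterating Lemma~\ref{pro:zarb}, any finitely generated left $R$-module decomposes compatibly along the factors $M_{n_i}(D_i)$, and the property we want (a direct sum of a projective and a singular semisimple piece) respects this decomposition. Furthermore, since $M_n(D)\approx D$ is a Morita equivalence and each of the properties ``finitely generated'', ``projective'', ``singular'', and ``semisimple'' is Morita invariant, the whole proposition reduces to the following: for $D$ a principal ideal V-domain and $N$ any finitely generated left $D$-module, $N$ is a direct sum of a projective module and a singular (injective) semisimple module.

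For this reduced statement, I would apply the structure theorem for finitely generated modules over a (non-commutative) left PID, in its Jacobson normal form, to write $N = F \oplus T$ with $F$ free and $T = \bigoplus_{i=1}^{s} D/Dd_i$ for non-zero $d_i \in D$. The free summand $F$ supplies the projective part. For $T$, singularity is immediate: $D$ is a left Noetherian domain, hence left Ore, so any two non-zero left ideals of $D$ meet non-trivially, i.e.\ every non-zero left ideal of $D$ is essential; thus each $D/Dd_i$ is singular, and so is $T$.

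The remaining and main point is that $T$ is a semisimple injective module. I would first observe that each $D/Dd_i$ has finite length, via a factorization of $d_i$ into atoms in the PID $D$ producing a finite composition series; hence $T$ is Artinian and $\mathrm{Soc}(T)\leq_e T$. Since $D$ is a left V-ring, every simple left $D$-module is injective; since $D$ is left Noetherian, a direct sum of injectives is injective; hence $\mathrm{Soc}(T)$ is an injective submodule of $T$ and therefore a direct summand. Essentiality then forces $\mathrm{Soc}(T)=T$, which yields semisimplicity (and injectivity, as a finite direct sum of injective simples). The chief obstacle is the finite-length claim for cyclic torsion modules over a non-commutative PID; once that is granted, the socle / V-ring / Noetherian argument gives semisimplicity cleanly, and the rest is assembly.
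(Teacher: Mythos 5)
Your proof is correct and follows the same overall strategy as the paper: reduce via Lemma~\ref{pro:zarb} and Morita equivalence to a single principal ideal V-domain $D$, decompose a finitely generated $D$-module as free plus torsion using the structure theorem over a PID, observe that the torsion part is singular because $D$ is (left) Ore so every non-zero left ideal is essential, and then show the torsion part is semisimple by establishing finite length, essentiality of the socle, and injectivity of the finitely generated socle via the V-ring hypothesis together with Noetherianity. The one step where you part company with the paper's proof is how finite length of the cyclic torsion modules is obtained: you factor each $d_i$ into atoms to build a composition series of $D/Dd_i$, whereas the paper invokes \cite[Proposition 5.4.6]{Mcc}, the general fact that over a hereditary Noetherian ring every cyclic singular module has finite length. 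Your route is more elementary and self-contained, but it does require checking (which you rightly flag as the chief obstacle) that a two-sided principal ideal domain is atomic --- true, via ACC on principal left ideals together with Dedekind-finiteness of domains --- and some care with sides when forming the associated chain $D \supset Da_k \supset D a_{k-1} a_k \supset \cdots \supset Dd_i$ with simple quotients $D/Da_j$; the paper's citation sidesteps both of those points at the cost of appealing to a heavier external result.
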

\begin{proof}
By Lemma  \ref{pro:zarb}, we can assume that $k=1$, i.e., $R=M_{n}(D)$ where $n\in\Bbb{N}$ and $D$ is a  principal ideal V-domain. It is well-known that properties of being projective, being injective, being finitely generated and  being singular are Morita invariant. Since  $D$  is Morita equivalent to $M_{n}(D)$,   so we can assume that $n=1$, i.e., $R=D$. Let   $M$ be  a finitely generated left $D$-module.  By \cite[Theorem 1.4.10]{Cohn1},
$M \cong D/I_1 \oplus \cdots D/I_n \oplus D^{(m)}$ for some non-zero left ideals $I_i$ ($1\leq i\leq n$) of
$D$ and $m\in\Bbb{N}\cup \{0\}$.
Since every non-zero left ideal of $D$ is essential, so each $D/I_i$ is singular.
       It follows that $M=Z(M) \oplus P$ where $P$ is a projective left $D$-module.

      Now let $S$ be any cyclic $D$-submodule in $Z(M)$. Since $D$ is a hereditary  Noetherian ring,
       by \cite[Proposition 5.4.6]{Mcc}, $S$ has a finite length. It follows that ${\rm Soc}\big(Z(M)\big)\leq_e Z(M)$.
      Also since $D$ is Noetherian,  Z$(M)$ and so Soc$\big(Z(M)\big)$ is  finitely generated. Thus the V-domain condition
      on $D$ implies that  ${\rm Soc}\big(Z(M)\big)$ must be a direct summand of Z$(M)$, proving that
       ${\rm Soc}\big(Z(M)\big)=$Z$(M)$.  Therefore,
     $M=Z(M)\oplus P$  where $Z(M)$ is a semisimple (injective)  module and $P$ is a projective module, as desired.
\end{proof}

We are now going to give the following another generalization of the Wedderburn-Artin Theorem.

Let   $R$ be a ring and $M$  an  $R$-module.   We recall  that a submodule  $N$ of $M$ is
({\it essentially}) {\it closed}  if $N\leq_e K\leq M$ always implies $N=K$.
 Also, the module $M$ is called  {\it extending} (or {\it CS-module}) if every closed submodule of $M$ is a direct summand of $M$.
Given $n\in \mathbb{N}$, a uniform $R$-module $U$ is called an $n$-CS$^+$ module if $U^{(n)}$
is extending and each uniform direct summand of $U^{(n)}$ is isomorphic to $_RU$. An integral
domain in which every finitely generated left ideal is principal is called a
{\it left Bezout domain}. {\it Right Bezout} domains are defined similarly, and when both
conditions hold we speak of a Bezout domain.

The following lemmas are needed.

\begin{Lem}\label{lem:cs+}
\textup{(See \cite [Theorem 2.2]{Cla})} Let $R$ be a simple ring. Then $R$ contains a uniform left ideal $U$ such that $_RU$ is
 $2$-CS$^+$ if and only if
 $R$ is isomorphic to the $k\times k$ matrix ring over a Bezout domain $D$ for some $k \in \mathbb{N}$.
\end{Lem}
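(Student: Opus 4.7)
The plan is to analyze the endomorphism ring $D = \mathrm{End}_R(U)$ of the uniform left ideal $U$ and to transfer the $2$-CS$^+$ condition into a Bezout-type property of $D$ via Morita theory.

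For the easier $(\Leftarrow)$ direction, assume $R \cong M_k(D)$ with $D$ a Bezout domain. Take $U$ to be the ``first column'' left ideal of $M_k(D)$; then $U$ is uniform with $U^{(k)} \cong {}_RR$. Under the Morita equivalence between $R$ and $D$, the module $_RU$ corresponds to $_DD$, so it suffices to check that $_DD \oplus {}_DD$ is extending and that every uniform direct summand of it is isomorphic to $_DD$. The extending property follows from the Bezout hypothesis via a Smith-normal-form style diagonalization of $1 \times 2$ relation matrices; the second claim is immediate because uniform direct summands of $D \oplus D$ are torsion-free cyclic over the domain $D$.

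For the harder $(\Rightarrow)$ direction, assume $R$ is simple and $U$ is a uniform $2$-CS$^+$ left ideal. Simplicity implies $R$ is prime, and the presence of a uniform left ideal together with the identity $R = \sum_{r \in R} Ur$ (forced by simplicity) gives that $R$ is left Goldie with $k := \mathrm{u.dim}(_RR) < \infty$, and $_RR$ has an essential submodule isomorphic to $U^{(k)}$. Propagating the $2$-CS$^+$ hypothesis to $U^{(k)}$, one can upgrade this essential embedding to an equality $_RR \cong U^{(k)}$. Setting $D = \mathrm{End}_R(U)$, non-singularity of $U$ (which follows from $R$ being prime Goldie) forces any nonzero $g \in D$ to be injective: if $\ker g$ were a nonzero submodule of the uniform module $U$, it would be essential, making $g(U) \cong U/\ker g$ a singular submodule of the non-singular $U$, hence zero. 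Thus $D$ is a domain, and Morita theory then gives $R \cong \mathrm{End}_R(U^{(k)})^{\mathrm{op}} \cong M_k(D)^{\mathrm{op}} \cong M_k(D^{\mathrm{op}})$.

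The main obstacle is to show $D$ is Bezout. Given $a, b \in D$, set $K = \{(d_1, d_2) \in D \oplus D : d_1 a + d_2 b = 0\}$. Because $D$ is a domain, $K$ is closed in $D \oplus D$: any element $(e_1, e_2)$ of its essential closure admits a nonzero $d \in D$ with $d(e_1 a + e_2 b) = 0$, forcing $e_1 a + e_2 b = 0$. Transferring the extending hypothesis from $U \oplus U$ to $D \oplus D$ via Morita, the closed submodule $K$ is a direct summand, so $D \oplus D = K \oplus V$ with $V$ uniform, and the $2$-CS$^+$ hypothesis yields $V \cong {}_DD$. The restriction of $(d_1, d_2) \mapsto d_1 a + d_2 b$ to $V$ is injective (since $V \cap K = 0$) and surjective onto $Da + Db$ (since the full map vanishes on $K$), so $Da + Db \cong V \cong D$ is principal. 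The delicate point throughout is the transfer between $R\text{-Mod}$ and $D\text{-Mod}$ of the extending and uniform-summand conditions, which is what makes the Morita setup in the previous paragraph indispensable.
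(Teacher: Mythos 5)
The paper does not prove this lemma at all: it is invoked as a black box, citing Clark and Huynh \cite[Theorem 2.2]{Cla}, so there is no authors' argument to compare yours against. Judged on its own terms, your outline has the right architecture (pass to $D = \mathrm{End}_R(U)$ by Morita theory, read off Bezout from the way closed relation submodules of $D\oplus D$ split), and the $(\Leftarrow)$ direction and the final Bezout computation are essentially sound. However, several steps in the $(\Rightarrow)$ direction are stated much more confidently than they deserve.

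First, the jump from ``simplicity plus a uniform left ideal'' to ``$R$ is left Goldie with $\mathrm{u.dim}(_RR)=k<\infty$ and ${}_RU^{(k)}\leq_e {}_RR$'' is not automatic. What actually makes it work is that $Z(_RR)$ is a proper two-sided ideal, hence $0$ by simplicity, so for each $r$ in a finite expression $1=\sum u_ir_i$ the image $Ur_i$ is either $0$ or an isomorphic copy of $U$ (a nonzero kernel in the uniform $U$ would force $Ur_i$ to be singular). You invoke nonsingularity later for a different purpose but suppress it exactly where it is load-bearing. Second, and more seriously, the phrase ``propagating the $2$-CS$^+$ hypothesis to $U^{(k)}$, one can upgrade this essential embedding to an equality ${}_RR\cong U^{(k)}$'' glosses over two genuine theorems: that an extending $U^{(2)}$ for a uniform $U$ forces $U^{(n)}$ to be extending for all $n$, and that extending plus the identification of uniform summands forces an essential copy of $U^{(k)}$ in the projective module $R$ to actually split as $R$ itself. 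These are not formal consequences and need citations or proofs. Third, your Bezout argument for nonzero $a,b\in D$ requires $K\neq 0$ so that the complement $V$ is uniform; this needs $D$ to be left Ore, which does hold (since $_DD$ is uniform by Morita transfer of the uniformity of $U$) but is never mentioned. Finally, your Morita step produces $R\cong M_k(D^{\mathrm{op}})$, yet your Bezout computation establishes only a one-sided Bezout condition on $D$; the lemma's conclusion (with the paper's convention that ``Bezout domain'' is two-sided) requires the other side as well, and that direction is missing.
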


\begin{Lem}\label{lem:Bezout Ore}
\textup{(\cite[Proposition 2.3.17]{Cohn1})}
If $R$ is a right Bezout domain then $R$ is right Ore domain.
\end{Lem}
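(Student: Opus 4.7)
The plan is to show directly that the right Ore condition holds, namely that for every pair of nonzero elements $a,b\in R$ the intersection $aR\cap bR$ is nonzero. Equivalently, I will prove that the right module $R_R$ is uniform, from which the Ore condition for a domain is immediate. Since $R$ is a domain, the only ingredient beyond the Bezout hypothesis I need is the absence of zero-divisors, which will be used as a cancellation tool.

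Argue by contradiction: suppose there exist nonzero $a,b\in R$ with $aR\cap bR=0$, so the sum $aR+bR$ is internal direct, $aR\oplus bR$. By the right Bezout hypothesis this finitely generated right ideal is principal, say $aR\oplus bR=dR$. Then there exist $x,y,u,v\in R$ with
\[
a=dx,\qquad b=dy,\qquad d=au+bv.
\]
Substituting the last equation into the first two gives $a=aux+bvx$ and $b=auy+bvy$. Moving terms and using $aR\cap bR=0$ forces $bvx=0$ and $auy=0$, as well as $a(1-ux)=0$ and $b(1-vy)=0$. Because $R$ is a domain and $a,b\neq 0$, cancellation yields
\[
ux=1,\qquad vy=1,\qquad vx=0,\qquad uy=0.
\]

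The relation $ux=1$ forces $u\neq 0$, and then $uy=0$ together with $R$ being a domain forces $y=0$; but this contradicts $vy=1$. Hence no such $a,b$ exist, so $aR\cap bR\neq 0$ for all nonzero $a,b\in R$, and $R$ is a right Ore domain.

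The only subtle point, and really the heart of the argument, is recognising that once $aR\oplus bR$ is principal the generator $d$ must admit two independent coefficient expressions, which via the Bezout identity $d=au+bv$ produce the four equations above. Everything else is just cancellation in a domain, so no serious obstacle is expected; the hypothesis that $R$ is a domain (rather than merely a ring with no nontrivial f.g.\ direct summand in $R_R$) is what makes the contradiction clean.
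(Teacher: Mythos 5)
Your proof is correct. Note that the paper does not actually prove this lemma; it simply cites Cohn's \emph{Free ideal rings and localization in general rings}, Proposition 2.3.17, so your argument supplies a self-contained proof where the paper offers only a reference. Your route is the direct one: assuming $aR\cap bR=0$ for nonzero $a,b$, the Bezout hypothesis gives $aR\oplus bR=dR$, and from $a=dx$, $b=dy$, $d=au+bv$ you extract, via the directness of the sum and cancellation in the domain, the four relations $ux=1$, $vy=1$, $vx=0$, $uy=0$; since $ux=1$ forces $u\neq 0$, the relation $uy=0$ forces $y=0$, contradicting $vy=1$. This is clean and, importantly, avoids any appeal to rank invariance: a tempting shortcut is to say $aR\oplus bR$ is free of rank $2$ while $dR$ is free of rank $1$, but that requires knowing $R$ has IBN, which for an arbitrary (not yet known to be Ore) domain is not something one should take for granted without argument. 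Your elementwise computation sidesteps that issue entirely and is essentially the textbook argument that the citation points to.
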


\begin{Lem}\label{lem:emmbed}
\textup {(See \cite[Lemma 1]{Cam-Coz})} Let $R$ be a semihereditary and  Goldie ring with
 classical quotient ring $Q$. Let $S$ be a simple right $R$-module.
Then
 $S$ is finitely presented if and only if
$S$ may be embedded in the module $(Q/R)\oplus R$.
\end{Lem}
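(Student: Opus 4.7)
The plan is to handle the two implications separately, using the torsion theory on right $R$-modules determined by the set of regular elements: the existence of $Q$ together with the Goldie hypothesis implies that every essential right ideal of $R$ contains a regular element, and that every cyclic torsion module is annihilated elementwise by a regular element via the right Ore condition.

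For $(\Rightarrow)$, write $S = R/I$ with $I$ finitely generated and maximal. If $I$ is not essential in $R_R$, maximality plus the existence of a nonzero complement forces $R = I \oplus J$ with $J \cong S$, so $S \hookrightarrow R \hookrightarrow (Q/R) \oplus R$. Otherwise $I$ is essential, hence contains a regular element $c$, and $S$ is torsion. To exhibit the embedding $S \hookrightarrow Q/R$, I would apply $\mathrm{Hom}_R(S,-)$ to the exact sequence $0 \to R \to Q \to Q/R \to 0$. Since $S$ is torsion and both $R$ and $Q$ are torsion-free, $\mathrm{Hom}_R(S,R) = \mathrm{Hom}_R(S,Q) = 0$, so the connecting homomorphism gives an injection $\mathrm{Hom}_R(S,Q/R) \hookrightarrow \mathrm{Ext}^1_R(S,R)$. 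Using that $S$ is finitely presented with presentation $0 \to I \to R \to S \to 0$ where $I$ is finitely generated projective (semihereditary), $\mathrm{Ext}^1_R(S,R)$ can be identified with the colon quotient $\{q \in Q : qI \subseteq R\}/R$, which is nonzero by an argument exploiting projectivity of $I$ together with the essentiality of $I$ in $R$. A nonzero map $S \to Q/R$ is then automatically an embedding because $S$ is simple.

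For $(\Leftarrow)$, suppose $S \hookrightarrow (Q/R) \oplus R$; simplicity of $S$ means at least one coordinate of this embedding is injective on $S$. If $S \hookrightarrow R$, then $S$ is a simple, projective (semihereditary) right ideal, so $S = eR$ for an idempotent $e$ and $S \cong R/(1-e)R$ is finitely presented. If $S \hookrightarrow Q/R$, lift a cyclic generator of $S$ to $x \in Q$ and write $x = ac^{-1}$ with $c$ regular; then $S$ lies inside the submodule $c^{-1}R/R$ of $Q/R$, which is isomorphic to the cyclic finitely presented module $R/cR$ via multiplication by $c$. Hence $S$ is a finitely generated submodule of a finitely presented module, and since semihereditary rings are right coherent, $S$ is itself finitely presented.

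The main obstacle will be the $\mathrm{Ext}^1$ computation in the forward direction. In detail, one needs that $Q$ behaves nicely against $S$ (ensured here because the existence of a classical quotient ring over a Goldie ring forces the semiprime Goldie setting in which $Q$ is semisimple Artinian), and one needs the structural fact that for a finitely generated essential right ideal $I$ of a semihereditary Goldie ring, the colon $(R\!:\!I) = \{q \in Q : qI \subseteq R\}$ strictly contains $R$. This second point is the technical heart and I would establish it by applying the dual basis lemma to the projective module $I$ and using that the regular element $c \in I$ allows one to express dual bases as left multiplication by elements of $Q$, some of which necessarily fall outside $R$ whenever $I \neq cR$.
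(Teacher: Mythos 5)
The paper does not prove this lemma; it is quoted directly from Camillo--Cozzens \cite{Cam-Coz}, so there is no internal argument to compare against. Your route is essentially theirs. The backward direction is sound as written: the reduction of $S\hookrightarrow Q/R$ to the submodule $c^{-1}R/R \cong R/cR$, combined with the fact that a right semihereditary ring is right coherent (so that a finitely generated submodule of the finitely presented module $R/cR$ is again finitely presented), does the job; and if $S\hookrightarrow R$, the cyclic projective module $S$ is a direct summand of $R$, hence finitely presented.

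There is, however, a genuine logical slip in the forward direction. From the long exact sequence of $\mathrm{Hom}_R(S,-)$ applied to $0 \to R \to Q \to Q/R \to 0$, the vanishing of $\mathrm{Hom}_R(S,R)$ and $\mathrm{Hom}_R(S,Q)$ gives an \emph{injection}
\[
\mathrm{Hom}_R(S,Q/R) \;\hookrightarrow\; \mathrm{Ext}^1_R(S,R),
\]
as you state; but an injection into a nonzero group does not force the source to be nonzero, which is what you actually need in order to produce a map $S\to Q/R$. You must additionally argue that this connecting map is \emph{surjective}, i.e.\ that $\mathrm{Ext}^1_R(S,Q)=0$; that requires $Q_R$ to be injective, and this is precisely where the hypotheses earn their keep (right semihereditary $\Rightarrow$ right nonsingular, and together with the Goldie condition this identifies $Q$ with the maximal right quotient ring $E(R_R)$). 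Your closing remark that ``$Q$ behaves nicely against $S$'' gestures at this, but as written the implication ``colon quotient nonzero $\Rightarrow$ $\mathrm{Hom}_R(S,Q/R)\neq 0$'' does not follow from an injection alone. Once $\mathrm{Ext}^1_R(S,Q)=0$ is in place, the identification $\mathrm{Ext}^1_R(S,R)\cong (R\!:\!I)/R$ is fine, and the dual-basis step can be tightened: if every $f\in\mathrm{Hom}_R(I,R)$ were left multiplication by an element of $R$, then for a regular $c\in I$ the dual-basis identity $c=\sum_i x_i f_i(c)=\bigl(\sum_i x_i r_i\bigr)c$ would give $1=\sum_i x_i r_i\in I$, contradicting $I\neq R$; there is no need to compare $I$ with $cR$.
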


\begin{Lem}\label{lem:emmbednoeth}
\textup {(See \cite[Theorem 2]{Cam-Coz})} Let $R$ be a left Noetherian, left hereditary, semiprime
     right Goldie with classical quotient ring $Q$. Then  $R$ is right Noetherian if and only every simple right $R$-module
     can be embedded in $(Q/R)\oplus R$.
\end{Lem}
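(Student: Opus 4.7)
My plan is to drive both implications through Lemma \ref{lem:emmbed}, which for a semihereditary Goldie ring with classical quotient ring $Q$ already exchanges ``finitely presented'' and ``embeds in $(Q/R)\oplus R$'' for simple right modules. Before invoking it I would verify the hypotheses are in force: semiprime right Goldie supplies the semisimple Artinian right classical quotient $Q$ via Goldie's theorem, and the semihereditary condition I would obtain from the standard structure theorem for semiprime left Noetherian left hereditary rings, which decomposes $R$ as a finite direct product of hereditary Noetherian prime rings, each of which is semihereditary on both sides.

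The forward direction is then essentially bookkeeping: if $R$ is right Noetherian, then every simple right $R$-module is finitely generated and the kernel of its presentation by a cyclic module is again finitely generated, so the simple is finitely presented; Lemma \ref{lem:emmbed} immediately delivers the embedding into $(Q/R)\oplus R$.

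For the backward direction, the embedding hypothesis together with Lemma \ref{lem:emmbed} forces every simple right $R$-module to be finitely presented. To conclude that $R$ is right Noetherian, I would reduce to the prime case via the product decomposition and then show by induction on composition length that every right $R$-module of finite length is finitely presented: the base case is exactly the hypothesis on simples, and the inductive step uses the left hereditary condition to keep the kernel of any presentation projective and hence, by the left Noetherian condition, finitely generated. Given an arbitrary right ideal $I$ of $R$, the cyclic module $R/I$ decomposes as a torsion part (of finite length, by the Goldie hypothesis) together with a projective complement, each finitely presented, so $I$ is finitely generated and $R$ is right Noetherian.

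The fragile step I expect to grapple with is the inductive propagation of finite presentation in the backward direction, which must be carried out before $R$ is known to be right Noetherian and so cannot take right-sided chain conditions for granted; the left hereditary and left Noetherian hypotheses have to carry the whole weight of controlling the relation modules, and the reduction to prime factors has to be done with care so that the embedding hypothesis actually survives under projection onto each factor of the product decomposition.
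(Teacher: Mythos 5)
The paper offers no internal proof of this lemma; it is cited verbatim from Camillo--Cozzens, so there is nothing to match against. Your blind attempt, however, has structural gaps in the backward direction that you cannot talk your way around. The most basic one is a left/right mismatch: you propose to control the relation module $K$ in a presentation $0\to K\to F\to M\to 0$ of a \emph{right} module $M$ by invoking ``the left hereditary condition to keep the kernel projective and hence, by the left Noetherian condition, finitely generated.'' But $K$ is a right submodule of a free right module, and neither left hereditary nor left Noetherian says anything about right submodules of $F_R$; it would take right semihereditary to get projectivity of a finitely generated $K$, and even then nothing forces $K$ to be finitely generated before right Noetherian is established. The second gap is the reduction to the prime case: the decomposition of a semiprime Noetherian hereditary ring into hereditary Noetherian prime factors uses two-sided Noetherian hypotheses, which is exactly what the backward direction must produce, not consume.

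The third difficulty is the claim that $R/I$ decomposes as a torsion summand of finite length plus a projective complement. Neither half of that is available without right Noetherian hereditary hypotheses. The Goldie assumption alone does not make cyclic torsion modules have finite length (over $k[x_1,x_2]$, a semiprime Noetherian Goldie domain, $R/(x_1)\cong k[x_2]$ is torsion and has infinite length); finite length of cyclic torsion modules is a theorem about HNP rings, which you do not yet have. Nor does the torsion submodule $Z(R/I)$ split off in general; that too is a consequence of the HNP structure, so invoking it is again circular. You correctly flag this as the fragile step, but the fix is not merely to ``carry it out with care'': the left-sided conditions simply cannot carry the weight you are assigning them, and the actual Camillo--Cozzens argument has to find a different route (for instance via injective hulls of simples and the fact that $Q/R$ is the maximal torsion quotient of the injective hull $Q$ of $R_R$) rather than the composition-length induction you sketch.
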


 \begin{The}\label{the:char fully}
 \textup{({\bf Second  generalization of  the  Wedderburn-Artin Theorem).}}
 The following statements are equivalent for a ring $R$.\vspace{2mm}\\
  {\rm (1)} All finitely generated left  $R$-modules are virtually semisimple.\vspace{1mm}\\
{\rm ($1^\prime$)} All finitely generated right  $R$-modules are virtually semisimple.\vspace{1mm}\\
   {\rm (2)} All finitely generated left $R$-modules are completely virtually semisimple.\vspace{1mm}\\
   {\rm ($2^\prime$)} All finitely generated right  $R$-modules are completely virtually semisimple.\vspace{1mm}\\
    {\rm (3)} $R \cong \prod_{i=1}^{k} M_{n_i}(D_i)$ where each $D_i$ is a principal ideal V-domain.
\end{The}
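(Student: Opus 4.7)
The plan is to traverse the cycle $(2)\Rightarrow(1)\Rightarrow(3)\Rightarrow(2)$ on the left together with the analogous right-sided cycle, noting that $(2)\Rightarrow(1)$ and $(2')\Rightarrow(1')$ are immediate from the definitions. The substantive work is thus $(3)\Rightarrow(2)$ (and symmetrically $(3)\Rightarrow(2')$) and $(1)\Rightarrow(3)$ (with $(1')\Rightarrow(3)$ following by the mirror argument).

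For $(3)\Rightarrow(2)$ I would use Morita invariance of ``finitely generated'' and of ``(completely) virtually semisimple'', combined with Lemma~\ref{pro:zarb}, to reduce to $R=D$, a principal ideal V-domain. Given a finitely generated left $D$-module $M$, Proposition~\ref{cor:SS+pro} decomposes $M=Z(M)\oplus P$ with $Z(M)$ injective semisimple and $P$ finitely generated projective, hence free over the left PID $D$, say $P\cong D^{n}$. For any submodule $N\leq M$ one computes that $Z(N)=N\cap Z(M)$ is a direct summand of the semisimple $Z(M)$, while $N/Z(N)$ embeds into $M/Z(M)\cong D^{n}$ and is therefore free of some rank $m\leq n$ by left hereditariness of $D$. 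Hence $N\cong Z(N)\oplus D^{m}$, which is manifestly isomorphic to a direct summand of $M$. Iterating on $N$ (which has the same ``free $\oplus$ injective semisimple'' shape as $M$) shows every submodule of $N$ is again isomorphic to a direct summand of $N$, so $M$ is completely virtually semisimple. The symmetric implication $(3)\Rightarrow(2')$ follows identically, since being a principal ideal V-domain is a two-sided condition.

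For $(1)\Rightarrow(3)$: by (1), $R$ itself is finitely generated and virtually semisimple, so Lemma~\ref{pro:property1}(i) writes $R$ as a direct sum of cyclic critical Noetherian modules $R/P_{i}$. In particular $R$ is left Noetherian, every left ideal is finitely generated, and (1) then makes every left ideal virtually semisimple, so $R$ is left completely virtually semisimple. Theorem~\ref{the:maintheorem} yields $R\cong\prod_{i=1}^{k}M_{n_{i}}(D_{i})$ with each $D_{i}$ a principal left ideal domain, and by Morita invariance every finitely generated left $D_{i}$-module remains virtually semisimple. To establish that $D_{i}$ is a left V-ring, let $S$ be a simple left $D_{i}$-module with injective hull $E=E(S)$; if $S\subsetneq E$, pick $x\in E\setminus S$ and set $F=D_{i}x+S$. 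Then $F$ is finitely generated and sits inside the uniform $E$, so $F$ is uniform, hence indecomposable, yet virtually semisimple; the nonzero submodule $S$ must be isomorphic to a direct summand of $F$, forcing $S\cong F$, so $F$ is simple and equals $S$, contradicting $x\notin S$. Thus $E=S$ and $D_{i}$ is a left V-ring.

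The hard part will be to upgrade $D_{i}$ from a principal left ideal domain to a two-sided PID. My plan here is to first verify that $D_{i}$ is right Ore (so right Goldie, being a domain), and then to chain Lemmas~\ref{lem:Bezout Ore}, \ref{lem:emmbed} and \ref{lem:emmbednoeth}: the last reduces right Noetherianity of $D_{i}$ to embedding every simple right $D_{i}$-module in $(Q/D_{i})\oplus D_{i}$, Lemma~\ref{lem:emmbed} identifies such embeddings with finite presentability of the simple right module, and the left principal-ideal-plus-left-V-ring structure (supplemented, where needed, by Lemma~\ref{lem:cs+} applied to a suitable matrix ring over $D_{i}$) should supply the required finite presentations. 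Once $D_{i}$ is right Noetherian and right Ore, a left-Bezout-style argument yields right principality. Finally, Remark~\ref{lem:PLID PRID v-dom} (via \cite[Corollary~3.3]{jain1}) promotes ``left V-ring'' to ``right V-ring'' for a PID, so $D_{i}$ is a principal ideal V-domain, establishing (3). The implication $(1')\Rightarrow(3)$ is obtained by the mirror argument applied to right modules.
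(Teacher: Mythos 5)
Your cycle $(2)\Rightarrow(1)$, $(1)\Rightarrow(3)$, $(3)\Rightarrow(2)$ is the same overall strategy as the paper's $(1)\Leftrightarrow(2)$, $(2)\Rightarrow(3)$, $(3)\Rightarrow(1)$, and several of your pieces are sound. Your $(3)\Rightarrow(2)$ reduction to $R=D$ via Lemma~\ref{pro:zarb} and Morita invariance, followed by the decomposition $M=Z(M)\oplus P$ from Proposition~\ref{cor:SS+pro} and the explicit analysis $N\cong Z(N)\oplus D^m$, is correct and in fact more self-contained than the paper's reference to \cite[Propositions 3.3(i), 2.3(ii)]{B-D-V}. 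Your argument that each $D_i$ is a left V-ring (via the finitely generated submodule $F=D_ix+S$ of $E(S)$) is a valid alternative to the paper's cyclic-submodule argument using Lemma~\ref{pro:property1}(ii); both hinge on finitely generated uniform virtually semisimple modules over $D_i$ being isomorphic to their essential submodules.

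The genuine gap is exactly where you flag ``the hard part''. You plan to ``first verify that $D_i$ is right Ore'' and then chain Lemmas~\ref{lem:Bezout Ore}, \ref{lem:emmbed}, \ref{lem:emmbednoeth}, with Lemma~\ref{lem:cs+} supplying what is ``needed''. But this has the logical order backwards and leaves the engine of the whole chain unbuilt. Right Ore does not follow from $D_i$ being a left Noetherian domain (that gives left Ore only); in the paper right Ore is a \emph{consequence} (via Lemma~\ref{lem:Bezout Ore}) of $D_i$ being a right Bezout domain, and right Bezout is obtained from Lemma~\ref{lem:cs+} only after one proves that $D_i\oplus D_i$ is an extending left $D_i$-module, so that $_{D_i}D_i$ is a $2$-CS$^+$ uniform left ideal of the simple ring $D_i$. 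That extending step is the technical core of the implication: one must take a closed submodule $N$ of $D\oplus D$, use \cite[Theorem 6.37]{Lam1} on uniform dimensions, invoke Kaplansky's theorem that $K$ with $Z((D\oplus D)/N)=K/N$ is free, and derive that $(D\oplus D)/N$ is nonsingular, hence free, hence that $N$ splits off. Your proposal contains no argument for this step, and without it you cannot invoke Lemma~\ref{lem:cs+}, hence cannot get right Bezout, right Ore, right semihereditary, or the finite presentation of simple torsion right modules (which in the paper comes from right Bezout + right Ore via \cite[Proposition 5.3.6]{Cohn1}, not from the ``left PID + left V'' structure). The phrase ``should supply the required finite presentations'' is a placeholder, not a proof, and this is precisely where the difficulty lives.

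A smaller point: even granting right Bezout and right Noetherian, you want to say every right ideal is finitely generated hence principal; that is fine, but it is worth making that one-line deduction explicit rather than gesturing at a ``left-Bezout-style argument''.
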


\begin{proof} Since the statement (3)  is symmetric, we only need to prove $(1) \Leftrightarrow (2)  \Leftrightarrow (3)$.

\noindent $(1) \Rightarrow (2)$ is by Lemma \ref{pro:property1} (not that every finitely generated virtually semisimple module is  Noetherian).

\noindent $(2) \Rightarrow (1)$ is evident.

\noindent $(2) \Rightarrow (3)$.  By assumption, $R$  is left completely virtually semisimple and so by Theorem \ref{the:maintheorem}, $R \cong \prod_{i=1}^{k} M_{n_i}(D_i)$ where $k\in\Bbb{N}$ and
each $D_i$ is a principal left ideal domain. By Remark \ref{lem:PLID PRID v-dom}, it suffices to prove that
each $D_i$ is a principal right ideal domain and a left V-domain. Let $D=D_i$ for some $i$.
 By (2) and the fact that ``completely virtually semisimplity is a Morita invariant property for  modules", we deduce that all finitely generated
 left $D$-modules are also completely virtually semisimple.

 Now let $S$ be a simple $D$-module. Assume that $E=E(S)$ is the injective hull of $_DS$ and $C$ is a cyclic $D$-submodule of $E$. Since $S\leq_e E$, we have $S\leq_e C$ and by Lemma \ref{pro:property1}(ii), $C \cong S$. It follows that $E=S$ and hence $D$ is a left $V$-domain.

  We claim that the left $D$-module $D\oplus D$ is extending. To see this, assume that $N$ is a closed $D$-submodule of $D\oplus D$. If ${\rm u.dim}(N)=2$ then $N$ is an  essential submodule of $D\oplus D$ and hence  $N=D\oplus D$. If ${\rm u.dim}(N)=1$,  then  by \cite[Theorem 6.37]{Lam1},   ${\rm u.dim}((D\oplus D)/N)=1$. Set  $U=(D\oplus D)/N$. Then by assumption, $U$ is a finitely generated uniform left virtually semisimple $D$-module and so  $U\cong D/P$ where $P$ is a  left ideal of $D$ by Lemma \ref{pro:property1}(i). Now if $Z(U)=K/N$ where $N\leq K\leq D\oplus D$,    then  by  Kaplansky's Theorem  \cite[Theorem 2.24]{Lam1}, $K$ is  a free (projective) left $D$-module.
   The singularity of $K/N$ implies that $N \leq_e K$. Since $N$ is closed, we have $N=K$ and hence $U$
   is a non-singular left $D$-module. It follows that $P=0$ (because $D$ is a principal left ideal domain and every non-zero left ideal in $D$ is essential). Thus $U\cong D$ and so $_DU$ is projective. This shows that $N$ is a direct summand
   of $D\oplus D$. Therefore, $D\oplus D$ is a left extending $D$-module, as desired.

   It is now clear that $D$ is a left  $2$-CS$^+$ $D$-module. Also since $D$ is a left V-domain, it is a simple ring.
Thus    by Lemma \ref{lem:cs+}, $D$ is a right Bezout domain. To complete the proof it now remains to
 prove that $D$ is a right Noetherian ring.

 By Lemma \ref{lem:Bezout Ore}, $D$ is a right Ore domain. Since $D$ is a left hereditary ring,  by  \cite[Corollary 12.18]{Dun}  $D$ is a right semihereditary ring.   We are applying Lemma \ref{lem:emmbednoeth} to
  show that $D$ is right Noetherian.
  Now assume that $S$ is a right simple $D$-module.  If $S\cong D$ then $D$ is semisimple and we are done.
   Thus we can assume   $S\cong D/P$ where $P$ is a non-zero maximal right ideal $D$. Since $D$ is right Ore, so
   $P$ is a right essential ideal of $D$ and hence $S_D$ is torsion. Thus by \cite[Proposition 5.3.6]{Cohn1},
   $S_D$ is finitely presented. It follows that  $S$
   can be embedded in the right $D$-module $(Q/D)\oplus D$ by Lemma \ref{lem:emmbed} where $Q$ is the
    classical quotient ring of $D$, and the proof is complete.

\noindent  $(3) \Rightarrow (1)$. By Lemma \ref{pro:zarb}, we can assume that $k=1$, i.e., $R=M_{n}(D)$ where $n\in\Bbb{N}$ and $D$ is a  principal ideal V-domain.  Since   being completely virtually semisimple is  Morita invariant property,  it is enough to show that every finitely generated left $D$-module is completely virtually semisimple. Assume that $M$ is a finitely generated left $D$-module. By Proposition \ref{cor:SS+pro}, $M=S\oplus P$ where $S$ is a semisimple $D$-module and $P$ is a projective $D$-module. Thus by
 \cite[Propositions 3.3 (i)]{B-D-V}, $P$ is virtually semisimple.
 We can assume that  ${\rm Soc(D)}=0$ (otherwise $D$ is a division ring and we are done), so we can deduce that ${\rm Soc}(P)=0$ and by
 \cite[Propositions 2.3 (ii)]{B-D-V}, $M$ is virtually semisimple and the proof is complete.
         \end{proof}

The following example, originally from Cozzens \cite{Coz},  shows that there are principal ideal $V$-domains which are not division rings.

\begin{Examp}\label{exa:cozzen}
\textup{(\cite[Example of Page 46]{jain}) Let $K$ be a universal differential field with derivation
$d$ and let $D = K[y;d]$ denote the ring of differential polynomials in the indeterminate $y$ with coefficients in $K$, i.e., the additive group of $K[y;d]$ is the additive
group of the ring of polynomials in the indeterminate $y$ with coefficients in field
$K$, and multiplication in $D$ is defined by: $ya = ay + d(a)$ for all $a$ in  $K$. It is shown that $D$ is both left and
right principal ideal domain,  the simple left $D$-modules are precisely of the form
$V_a =D/D(y-a)$ where $a$ in $K$ and each simple left $D$-module is injective . Hence $D$ is a left $V$-ring. Similarly, $D$ is a right $V$-ring.}
\end{Examp}

 In the following, we obtain a uniqueness decomposition theorem  for finite direct sum of virtually simple modules,  which is analogous to the classical Krull-Schmidt Theorem for direct sum decompositions of modules.

\begin{Lem}\label{pro:embbed}
Let  $M=V_1\oplus \cdots \oplus V_n$  be a direct sum of  virtually simple left $R$-modules. Then:\vspace{2mm}\\
{\rm (i)} If $N\leq_e {_RM}$ then $M\hookrightarrow N$.\vspace{1mm}\\
{\rm (ii)} If    $0\neq N \leq {_RM}$ then there is an index $j$ such that $V_j \hookrightarrow N$.
\end{Lem}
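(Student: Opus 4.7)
\textbf{Proof plan for Lemma \ref{pro:embbed}.}

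For part (ii), the plan is to induct on $n$. When $n=1$ the module $M=V_1$ is virtually simple, so any nonzero submodule $N$ already satisfies $N\cong V_1$, giving the required embedding tautologically. For the inductive step $n\geq 2$, I would split on whether $N$ meets the last summand:

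\textit{Case A: $N\cap V_n\neq 0$.} Since $V_n$ is virtually simple, the nonzero submodule $N\cap V_n$ is isomorphic to $V_n$, so $V_n\hookrightarrow N$ and we take $j=n$.

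\textit{Case B: $N\cap V_n=0$.} Then the projection $\pi:V_1\oplus\cdots\oplus V_n\to V_1\oplus\cdots\oplus V_{n-1}$ along $V_n$ restricts to a monomorphism on $N$, so $N\cong\pi(N)$, and $\pi(N)$ is a nonzero submodule of $V_1\oplus\cdots\oplus V_{n-1}$. Applying the induction hypothesis to $\pi(N)\leq V_1\oplus\cdots\oplus V_{n-1}$ yields some $j\leq n-1$ with $V_j\hookrightarrow\pi(N)\cong N$.

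For part (i), I would argue directly without induction. Assume $N\leq_e M$. Each $V_i$ is a nonzero submodule of $M$, so essentiality forces $N\cap V_i\neq 0$; since $V_i$ is virtually simple, this gives $N\cap V_i\cong V_i$ at once (no appeal to Lemma \ref{pro:property1}(ii) is needed, because the definition of virtually simple already says every nonzero submodule is isomorphic to the whole module). The sum $\sum_{i=1}^n (N\cap V_i)$ is contained in $N$ and is a direct sum because it sits inside the internal direct sum $V_1\oplus\cdots\oplus V_n$. Therefore
\[
M\;=\;V_1\oplus\cdots\oplus V_n\;\cong\;\bigoplus_{i=1}^n (N\cap V_i)\;\leq\;N,
\]
producing the desired embedding $M\hookrightarrow N$.

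There is no real obstacle in this lemma; the only points that need care are verifying that the projection in Case B of (ii) is injective on $N$ (which is immediate from $N\cap\ker\pi=N\cap V_n=0$) and, in (i), that the submodules $N\cap V_i$ remain in direct sum position inside $N$ (which is automatic from the direct sum decomposition of $M$). Virtual simplicity is invoked in a very direct way in both parts.
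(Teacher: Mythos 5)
Your proof is correct and takes essentially the same approach as the paper: for (i), the paper also uses essentiality to get $N\cap V_i\neq 0$, invokes virtual simplicity to conclude $N\cap V_i\cong V_i$, and embeds $M\cong\bigoplus_i(N\cap V_i)\subseteq N$; for (ii), the paper likewise indicates induction on $n$ with a case split on whether $N$ meets a chosen summand, which you have simply fleshed out.
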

\begin{proof}
(i) Assume that $N\leq_e M$. Then   $N \cap V_i\neq 0$ and so  $N \cap V_i \cong V_i$ for each $i$. This shows that
$M\cong \bigoplus_{i=1}^n (N\cap V_i) \subseteq N$.

\noindent (ii) Assume that   $0\neq N \leq M$. We can prove that the result by induction on $n$. Just consider the cases
$N \cap V_i\neq 0$ or $N \cap V_1\neq 0$.
\end{proof}

Let $M$ and $N$ be $R$-modules. We say that $M$ and $N$ are {\it $R$-subisomorphic} if $M\hookrightarrow N$ and $N\hookrightarrow M$.

\begin{The}\label{the:Krull-esch} {\rm {\bf (The Krull-Schmidt Theorem for virtually  simple  modules)}}.
Let   $M=V_1\oplus \cdots \oplus V_n$ and $N=U_1\oplus \cdots \oplus U_m$ where all  $V_i$'s and  $U_j$'s are  virtually simple modules.
 If $M$ and $N$ are $R$-subisomorphic, then   $n=m$ and  there is a  permutation $\sigma$ on $\{1,...,n\}$ such that $U_i \cong V_{\sigma(i)}$.
 \end{The}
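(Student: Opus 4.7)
The natural route is via injective hulls combined with the classical Krull--Schmidt (Azumaya) theorem for modules with local endomorphism rings. By Lemma~\ref{lem:nonisovs}(i) each $V_i$ and each $U_j$ is uniform, so $E(V_i)$ and $E(U_j)$ are indecomposable injective $R$-modules. Consequently
\[
E(M) \;=\; E(V_1) \oplus \cdots \oplus E(V_n), \qquad E(N) \;=\; E(U_1) \oplus \cdots \oplus E(U_m),
\]
are finite direct sums of indecomposable injectives, and ${\rm u.dim}(M)=n$, ${\rm u.dim}(N)=m$.

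The embedding $M \hookrightarrow N$ gives ${\rm u.dim}(M) \leq {\rm u.dim}(N)$, i.e.\ $n \leq m$; the opposite embedding yields $m \leq n$, so $n = m$. Next, by injectivity of $E(N)$ the composite $M \hookrightarrow N \hookrightarrow E(N)$ extends to a map $\tilde f : E(M) \to E(N)$. Because $\ker \tilde f \cap M = 0$ and $M \leq_e E(M)$, the map $\tilde f$ is injective; its image $\tilde f(E(M))$ is injective, hence a direct summand of $E(N)$, and since the two uniform dimensions coincide the complementary summand has uniform dimension $0$ and therefore vanishes. Thus $E(M) \cong E(N)$. Each $E(V_i)$ has a local endomorphism ring (on an indecomposable injective an endomorphism is a unit iff it is injective, and on the uniform module $E(V_i)$ the kernels of two non-injective endomorphisms have non-trivial intersection, giving closure of non-units under addition). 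Azumaya's theorem applied to the two decompositions of $E(M) \cong E(N)$ produces a permutation $\sigma$ of $\{1,\dots,n\}$ with $E(V_i) \cong E(U_{\sigma(i)})$ for every $i$.

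The remaining step -- the place where the \emph{virtual} simplicity hypothesis genuinely bites, and which I expect to be the main obstacle -- is to upgrade the isomorphism of injective hulls to an isomorphism of the modules themselves. Fix $i$ and, via the isomorphism $E(V_i) \cong E(U_{\sigma(i)})$, identify these injective hulls with a single indecomposable (hence uniform) injective $E$. Then $V_i$ and $U_{\sigma(i)}$ are both essential submodules of $E$, so their intersection $V_i \cap U_{\sigma(i)}$ is a nonzero submodule of each. By virtual simplicity this intersection is isomorphic to $V_i$ and to $U_{\sigma(i)}$ simultaneously, giving $V_i \cong U_{\sigma(i)}$ and completing the argument.
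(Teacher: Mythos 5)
Your argument is correct, and it differs genuinely from the paper's. The paper avoids injective hulls entirely: after using uniform dimension to get $n=m$, it regroups $M=\bigoplus V_i^{(m_i)}$ and $N=\bigoplus U_j^{(n_j)}$ by isomorphism type and then runs a direct counting argument (via Lemma~\ref{pro:embbed}(ii)) showing each isotypic block $V_i^{(m_i)}$ embeds into a unique $U_k^{(n_k)}$ and conversely, and that the multiplicities match, all again by uniform dimension. Your route passes through $E(M)\cong\bigoplus E(V_i)$ and $E(N)\cong\bigoplus E(U_j)$, establishes $E(M)\cong E(N)$ by the standard essential-extension argument, invokes Azumaya's theorem for finite direct sums of modules with local endomorphism rings (valid here because indecomposable injectives over any ring have local endomorphism rings), and then descends from $E(V_i)\cong E(U_{\sigma(i)})$ back to $V_i\cong U_{\sigma(i)}$ via the observation that both are essential in the common hull, so their nonzero intersection is simultaneously isomorphic to each by virtual simplicity. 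What your version buys is conceptual economy -- it delegates the uniqueness bookkeeping to a well-known theorem and isolates exactly where virtual simplicity is used (the final descent step). What the paper's version buys is self-containment: it never needs the existence or uniqueness of injective hulls, the locality of $\mathrm{End}(E)$, or Azumaya, and stays entirely within the elementary lemmas already established. Both are legitimate proofs; the one minor thing you should state explicitly for a polished write-up is that $E(V_1\oplus\cdots\oplus V_n)\cong E(V_1)\oplus\cdots\oplus E(V_n)$ because finite direct sums of injectives are injective and the sum of essential monomorphisms into a finite direct sum is essential.
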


\begin{proof}
Since virtually simple modules are uniform (Lemma \ref{lem:nonisovs}(i)), so
by our assumption, $m=$ u.dim$(N) \leq$ u.dim$(M)=n$ and vice versa. Thus
  $m=n$. Without loss of generality, we can assume that $M=X_1\oplus \cdots \oplus X_l$ and $N=Y_1\oplus \cdots \oplus Y_t$
where $X_i = V_i^{(m_i)}$ ($1\leq i\leq l$) with $V_i \ncong V_s$
($ i\neq s$)
 and $Y_j= U_j^{(n_j)}$ ($1\leq j \leq t$) with $U_j \ncong U_k$
 ($j\neq k $).
 Note that if $V_i \hookrightarrow U_k$ and $X_i \cap \big(\Sigma_{j\neq k} Y_j\big)=W$, then $W=0$. Otherwise,   there is a non–zero embedding $W\hookrightarrow X_i$ and so by Lemma \ref{pro:embbed}(ii), $V_i \hookrightarrow W$.
  It follows that $V_i \hookrightarrow \Sigma_{j\neq k} Y_j$ and hence $V_i\hookrightarrow U_h$ for some
 $h\neq k$, a contradiction. Thus we can conclude that for each $i\in \{1,\cdots, l\}$ there exists a unique $k\in \{1,\cdots, t\}$ such that $X_i \hookrightarrow Y_k$. Similarly, each $Y_j$ can be embedded in only one $X_i$'s.  This shows that
 $l\leq t$, and $t\leq l$, i.e., $t=l$. Clearly $V_i \hookrightarrow U_k$ if and only if $V_i\cong U_k$. Thus
 it is enough to show that $m_i=n_k$ when $X_i \hookrightarrow Y_k$.
  Again consider that if $X_i \hookrightarrow Y_k$ and $Y_k \hookrightarrow X_s$, then Lemma \ref{pro:embbed} proves
  that $i=s$. This shows that u.dim$(X_i)=m_i \leq$ u.dim$(Y_k)=n_k$ and vice versa, and hence the proof is now complete.
  \end{proof}

 \section{\bf Some applications}

We  give a  structure theorem for rings  over which every finitely generated  module is a direct sum
 of virtually simple modules.  Such rings form a proper subclass of the class of FGC rings. As an application of
 Theorem \ref{the:Krull-esch}, we first show that every completely virtually simple module is uniquely  (up to isomorphism) a direct sum of virtually simple modules, but the converse is not true in general.

\begin{Pro}\label{pro: com vss is ds}
 Every finitely generated completely virtually  semisimple  module is a direct sum of virtually simple modules. Up to a permutation, the virtually simple components in such a direct sum are uniquely
  determined up to isomorphism.
\end{Pro}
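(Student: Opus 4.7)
The plan is to establish the existence part via Lemma~\ref{pro:property1} and the uniqueness part via Theorem~\ref{the:Krull-esch}, with the completely virtually semisimple hypothesis entering only in the existence argument.

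For existence, let $M$ be finitely generated and completely virtually semisimple. Since $M$ is in particular virtually semisimple and finitely generated, Lemma~\ref{pro:property1}(i) gives a decomposition $M \cong R/P_1 \oplus \cdots \oplus R/P_n$ in which each $P_i$ is quasi-prime and each $R/P_i$ is a critical Noetherian $R$-module. I would then argue that each summand $R/P_i$ is actually virtually simple. Each $R/P_i$ is non-zero, and because it is critical it is uniform, hence indecomposable. Since $R/P_i$ is isomorphic to a direct summand of $M$, it is in particular isomorphic to a submodule of $M$, and the completely virtually semisimple hypothesis then forces $R/P_i$ itself to be virtually semisimple. A non-zero indecomposable virtually semisimple module is by definition virtually simple, so the decomposition above is the desired one.

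For uniqueness, suppose $M \cong V_1 \oplus \cdots \oplus V_n$ and $M \cong U_1 \oplus \cdots \oplus U_m$ with all $V_i$ and $U_j$ virtually simple. Composing these isomorphisms gives an isomorphism $V_1 \oplus \cdots \oplus V_n \cong U_1 \oplus \cdots \oplus U_m$, so in particular the two sides are $R$-subisomorphic (each embeds into the other via the identity composed with the isomorphism). Theorem~\ref{the:Krull-esch} immediately yields $n=m$ together with a permutation $\sigma$ of $\{1,\dots,n\}$ satisfying $U_i \cong V_{\sigma(i)}$, which is exactly the stated uniqueness.

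The only delicate point is the verification in the existence step that each $R/P_i$ is virtually simple. Here it is essential to use the \emph{completely} virtually semisimple hypothesis rather than mere virtual semisimplicity of $M$, since Lemma~\ref{pro:property1} by itself only guarantees the critical/Noetherian structure of the summands and not their virtual semisimplicity; indecomposability alone is not enough to conclude the virtually simple property. Beyond this observation, the proof is a clean assembly of previously established results, with no essentially new calculation required.
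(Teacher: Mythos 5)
Your argument is correct, but it follows a route genuinely different from the paper's. The paper proceeds by iterated splitting: if $M$ is not virtually simple, it has a submodule $N$ with $N\not\cong M$, virtual semisimplicity produces $M=U\oplus W$ with $U\cong N$, and then the completeness hypothesis lets one repeat the argument on any non--virtually-simple summand; the process terminates because Lemma~\ref{pro:property1}(i) guarantees ${\rm u.dim}(M)<\infty$, an invariant that strictly drops at each splitting. You instead invoke the full decomposition supplied by Lemma~\ref{pro:property1}(i), namely $M\cong R/P_1\oplus\cdots\oplus R/P_n$ with each $R/P_i$ critical Noetherian, and then promote each factor to being virtually simple by combining three observations: criticality gives uniformity, hence indecomposability; each $R/P_i$ is isomorphic to a submodule of $M$, so the \emph{completely} virtually semisimple hypothesis makes it virtually semisimple; and a non-zero indecomposable virtually semisimple module is virtually simple by definition. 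Both proofs use Lemma~\ref{pro:property1}(i), but where the paper extracts from it only the finiteness of uniform dimension as a termination bound for an induction, you extract the decomposition itself and check a pointwise property of its factors. Your version is shorter and avoids the induction entirely; the paper's has the minor advantage of not leaning on the quasi-prime/critical structural description, only on the dimension count. The uniqueness halves of the two proofs coincide, both being immediate from Theorem~\ref{the:Krull-esch}.
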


\begin{proof}
Assume that  $M$ is a finitely generated complectly virtually  semisimple  module. By Theorem \ref{the:Krull-esch}, it suffices to show that $M$ is a finite direct sum of virtually simple modules.   If $M$ is virtually simple then we are done. Assume that $M$ is not virtually simple. Thus there is a non-zero submodule $N$ of $M$ such that
$M\ncong N$. By assumption, $M=U\oplus W$ where $U\cong N$ and $0\neq W \leq M$. If $U$ and $W$ are virtually simple then we are done.
If not, without lose of generality, assume that $U$ is not virtually simple. So $U$ has a non-zero submodule $N_1\ncong U$. By assumption, $U$ is again virtually semisimple and  so $U=U_1\oplus W_1$ such that $N_1\cong U_1$ and $0\neq W_1 \leq U$.
It follows that $M=U_1\oplus W_1 \oplus W$.
  If one of the
$U_1, W_1$ or $W$ is not virtually simple,  for example $U_1$,  then we may  repeat the above argument with respect to $U_1$ and continue inductively. Since $M$ is a finitely generated virtually semisimple module, ${\rm u.dim}(M)< \infty$ by Lemma \ref{pro:property1}(i) and hence, we
  obtain virtually simple submodules $K_1,\ldots, K_n$ such that $M=\bigoplus _{i=1 }^n K_i$,  and the proof is completed.
\end{proof}

Let $R$ be a ring and $M$ be a left $R$-module. If $X$ is an element or a subset of $M$, we define the {\it annihilator} of $X$ in $R$ by ${\rm
Ann}_R(X) = \{r \in R~|~rX = (0)\}$. In the case $R$ is non-commutative and $X$ is an element or a subset of an $R$, we
define the {\it left annihilator} of $X$ in $R$ by ${\rm l.ann}_R(X) = \{r \in R~|~rX = (0)\}$ and the {\it right
annihilator} of $X$ in $R$ by ${\rm r.ann}_R(X) = \{r \in R~|~Xr = (0)\}$.

The following example shows that the converse of Proposition \ref{pro: com vss is ds} does not hold in general.

\begin{Examp}\label{exa:important}
 {\rm
Let $F$ be a field and we set $R=F[[x,y]]/\langle xy\rangle$. It
is clear that $\mathcal{M}=RX \oplus RY$ is a maximal ideal of
$R$ where $X= x+ \langle xy\rangle$ and $Y=y+\langle xy\rangle$.
It is easily see that ${\rm Spec}(R)=\{RX, RY, \mathcal{M} \}$.
Consider $\mathcal{M}$ as an $R$-module and hence $\mathcal{M}
\cong
{R /{\rm Ann_R}(X)} \bigoplus {R /{\rm Ann_R}(Y)}=R/RY \bigoplus
R/RX$. By \cite[Theorem 2.1]{Gil}, the rings $R/RX$ and $R/RY$
are principal ideal domains because they have principal prime
ideals. We show that $_R\mathcal{M}$ is not virtually
semisimple. Note that $X^2 =X(X+Y)$ and $Y^2=Y(X+Y)$ and so we
have
$RX^2 \oplus RY^2 \leq R(X+Y) \leq {_R\mathcal{M}}$. It follows
that ${\rm u.dim}(_R\mathcal{M})={\rm u.dim}(R(X+Y))$ or
equivalently $R(X+Y) \leq_e {_R\mathcal{M}}$. Now if
$_R\mathcal{M}$ is virtually semisimple we must have $R \cong
R(X+Y)\cong \mathcal{M} $, but $\mathcal{M}$ is not cyclic. Therefore $_R\cal M$ is not virtually semisimple.}
\end{Examp}

The following result provides a plain structure for virtually simple modules  over $M_{n}(D)$ where $n\in \Bbb{N}$ and  $D$  is a   principal  ideal V-domain.

\begin{Cor}\label{pro:charvs}
 Let  $R\cong M_{n}(D)$ where $n\in \Bbb{N}$ and  $D$  is a   principal  ideal V-domain. Then a
 left $R$-module $M$ is virtually simple if and only if
$$
M\cong \begin{pmatrix}
     D/P  \\
    D/P\\
    \vdots \\
    D/P
  \end{pmatrix}
$$ where $P$ is a  maximal left ideal of $D$ or $P=(0)$.
\end{Cor}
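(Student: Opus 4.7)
The plan is to reduce the classification to left $D$-modules via the Morita equivalence between $M_n(D)$ and $D$, and then use Proposition~\ref{cor:SS+pro} to identify the virtually simple $D$-modules. Virtually simple modules are exactly the non-zero indecomposable virtually semisimple modules; since being virtually semisimple is a Morita invariant property for modules (as recalled just before Theorem~\ref{the:maintheorem}), and indecomposability together with non-triviality are preserved by any category equivalence, the Morita functor $\mathcal{F}\colon D\text{-Mod}\to R\text{-Mod}$, $V\mapsto V^{(n)}$, realized as the column of $n$ copies of $V$ with the natural matrix action (this is the same $\mathcal{F}$ used in the proof of Theorem~\ref{the:maintheorem}), induces a bijection between isomorphism classes of virtually simple left $D$-modules and those of virtually simple left $R$-modules. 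Hence it suffices to show that the virtually simple left $D$-modules are precisely $D$ itself and the modules $D/P$ with $P$ a maximal left ideal of $D$.

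For the classification over $D$, I would take a virtually simple $_DN$ and first invoke Lemma~\ref{lem:nonisovs}(i) to conclude that $N$ is cyclic, so $N\cong D/P$ for some proper left ideal $P$. Because $N$ is finitely generated and $D$ is a principal ideal $V$-domain, Proposition~\ref{cor:SS+pro} yields a decomposition $N=S\oplus P'$ with $S$ semisimple and $P'$ projective. Virtual simplicity forces $N$ to be indecomposable, so one of $S,P'$ vanishes. If $P'=0$, then $N$ is a cyclic indecomposable semisimple module, hence simple, so $P$ is a maximal left ideal. If $S=0$, then $N$ is cyclic projective, i.e.\ $N\cong D/I$ with $D=I\oplus J$ as left $D$-modules; writing $1=e+f$ with $e\in I$, $f\in J$ and computing $e=ee+ef$ with $ee\in I$ and $ef\in J$ produces $e^{2}=e$ and $ef=0$, so $e$ is an idempotent of $D$. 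Since $D$ is a domain, $e\in\{0,1\}$, and the non-triviality of $N$ excludes $e=1$; hence $e=0$, which forces $J=D$ and $I=0$, giving $N\cong D$, i.e.\ $P=(0)$.

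For the converse, $_DD$ is virtually simple because every non-zero left ideal of the principal left ideal domain $D$ has the form $Da$ with $a\ne 0$, and right multiplication by $a$ gives an isomorphism $D\xrightarrow{\sim}Da$ (injectivity uses the domain hypothesis); while for $P$ maximal, $D/P$ is already simple, hence trivially virtually simple. Transporting these two families back along the Morita equivalence produces exactly the columns asserted in the statement, with $P$ maximal or $P=(0)$ as the admissible cases.

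The only delicate step is the extraction of an indecomposability-breaking direct summand from Proposition~\ref{cor:SS+pro}: I rely on the fact that the splitting there separates a singular (torsion) semisimple summand from a projective (torsion-free over the domain $D$) summand, so the two cannot mingle into an indecomposable non-simple, non-free module. Once this is granted, everything else is bookkeeping in the Morita correspondence together with the standard fact that cyclic projective modules over a domain are free of rank one.
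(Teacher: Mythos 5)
Your argument is correct and follows essentially the same route as the paper, which simply cites Proposition~\ref{cor:SS+pro} together with the Morita correspondence between $D$-modules and $M_n(D)$-modules; you have merely supplied the details that the paper leaves implicit. In particular, your reduction to $D$ via Morita invariance of virtual semisimplicity, the appeal to the semisimple-plus-projective decomposition of Proposition~\ref{cor:SS+pro} together with indecomposability to isolate the two cases, the idempotent argument identifying cyclic projective modules over a domain with $D$ itself, and the transport back to column modules are all exactly the bookkeeping the paper's one-line proof intends.
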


\begin{proof}
This is obtained by Proposition \ref{cor:SS+pro} and the familiar correspondence between modules over $D$ and
 $M_n(D)$.
   \end{proof}

\vspace{0.7mm}

\begin{The}\label{the:app1}
The following statements are equivalent for a ring $R$.\vspace{2mm}\\
{\rm (1)}  Every  finitely generated left  $R$-modules  is a direct  sum of virtually simple modules.\vspace{1mm}\\
{\rm ($1^\prime$)} Every  finitely generated right  $R$-modules  is a direct  sum of virtually simple modules.\vspace{1mm}\\
{\rm (2)}  $R\cong\prod_{i=1}^{k} M_{n_i}(D_i)$ where $k, n_1, ...,n_k\in \Bbb{N}$ and each $D_i$  is a
 principal  ideal V-domain.\vspace{1mm}\\
 {\rm (3)} Every finitely generated left $R$-modules is uniquely (up to isomorphism)  a   direct   sum  \indent of cyclic left  $R$-modules that are either simple or  virtually simple direct summand of
 \indent $_RR$.\vspace{1mm}\\
{\rm ($3^\prime$)}Every finitely generated right  $R$-modules is uniquely (up to isomorphism)  a   direct   sum  \indent of cyclic left  $R$-modules that are either simple or  virtually simple direct summand of
 \indent $_RR$.\vspace{1mm}\\
{\rm (4)} Every finitely generated left $R$-module is an extending module that embeds in a direct  \indent sum  of virtually simple modules.
\vspace{1mm}\\
{\rm ($4^\prime$)} Every finitely generated right $R$-module is an extending module that embeds in a direct \indent  sum  of virtually simple modules.
\end{The}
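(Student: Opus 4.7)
The plan is to use (2) as a hub and prove (2)~$\Leftrightarrow$~(1), (2)~$\Leftrightarrow$~(3) and (2)~$\Leftrightarrow$~(4); since (2) is left--right symmetric, the primed conditions then follow for free. For the implications out of (2): by Theorem~\ref{the:char fully} every finitely generated left $R$-module is completely virtually semisimple, and Proposition~\ref{pro: com vss is ds} writes each one uniquely, up to permutation of isomorphism types, as a finite direct sum of virtually simples, giving (1). Combining this with Proposition~\ref{cor:SS+pro}, which decomposes each such module as $S\oplus P$ with $S$ semisimple and $P$ projective, yields (3): $S$ splits into simple cyclics and $P$ splits into the virtually simple column-type left ideals of ${_RR}$ produced in the proof of Theorem~\ref{the:maintheorem}, and uniqueness follows from Theorem~\ref{the:Krull-esch}. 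For (4), the embedding part is immediate from (1), and the extending property holds because, via Morita invariance, one can reduce to showing $S\oplus D^n$ is extending over each principal ideal V-domain $D=D_i$: $S$ is a direct sum of injective simples (hence injective, since $D$ is left Noetherian) and therefore relatively injective with $D^n$, while $D^n$ is itself extending because its closed submodules have non-singular (hence, over a PID, free) quotients, as in the (2)~$\Rightarrow$~(3) argument of Theorem~\ref{the:char fully}. The implication (3)~$\Rightarrow$~(1) is immediate since simple modules are virtually simple.

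The main technical direction is (1)~$\Rightarrow$~(2). Applying (1) to $M={_RR}$ and Theorem~\ref{the:maintheorem} gives $R\cong\prod_{i=1}^{k} M_{n_i}(D_i)$ with each $D_i$ a principal left ideal domain; by Morita invariance and Lemma~\ref{pro:zarb}, every finitely generated left $D=D_i$-module is a direct sum of virtually simple $D$-modules. I then parallel the (2)~$\Rightarrow$~(3) argument of Theorem~\ref{the:char fully}, justifying the replacement of ``virtually semisimple'' by ``direct sum of virtually simples'' through the key observation that any uniform finitely generated direct sum of virtually simples collapses to a single virtually simple module. Concretely: (a) for each simple $_DS$, every cyclic submodule $C$ of $E(S)$ containing $S$ is finitely generated hence a direct sum of virtually simples, while $S\leq_e C$ forces $C$ uniform, hence a single virtually simple $\cong S$, hence equal to $S$, so $E(S)=S$ and $D$ is a left V-domain; (b) the closure plus Kaplansky-projectivity argument from the cited proof shows $D\oplus D$ is extending (the uniform quotient $(D\oplus D)/N$ is now a single virtually simple that collapses to $\cong D$); (c) Lemma~\ref{lem:cs+}, together with $D$ being a simple ring (left V-domain) and a domain, yields that $D$ is a two-sided Bezout domain; (d) Lemmas~\ref{lem:Bezout Ore}, \ref{lem:emmbed} and \ref{lem:emmbednoeth} give right Noetherianness and hence PRID; (e) Remark~\ref{lem:PLID PRID v-dom} upgrades left V-domain to two-sided V-domain, completing (2).

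The most delicate direction is (4)~$\Rightarrow$~(1). Given a finitely generated left $R$-module $M$, (4) provides $M$ extending and embedded in $V_1\oplus\cdots\oplus V_n$ with each $V_j$ virtually simple, so ${\rm u.dim}(M)\leq n$; the standard decomposition of extending modules of finite uniform dimension writes $M=U_1\oplus\cdots\oplus U_m$ with each $U_i$ uniform. The task reduces to showing each $U_i$ is virtually simple. The plan is to use Azumaya's uniqueness of indecomposable injective summands to identify $E(U_i)\cong E(V_{j_0})$ for some $j_0$, and then to transfer isomorphisms between essential submodules---in the spirit of Lemma~\ref{pro:property1}(ii)---down to $U_i$, using the extending property of $U_i$ and the Noetherianness of $R$ (obtained from (4) applied to ${_RR}$ via the uniform-summand decomposition). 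This final transfer is the main obstacle, since it amounts to extracting full virtual-simplicity of $U_i$ from the a priori weaker extending-plus-embedding data; as a fallback I would prove (4)~$\Rightarrow$~(2) directly by analyzing the uniform-direct-summand decomposition of ${_RR}$ itself, showing each uniform left-ideal summand must be virtually simple, and then invoking (2)~$\Rightarrow$~(1).
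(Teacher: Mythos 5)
Your overall structure (prove the implications out of and into the left–right symmetric condition (2)) is similar in spirit to the paper, which also exploits the symmetry of (2) and only argues the unprimed equivalences; but the paper arranges them as a single cycle $(1)\Rightarrow(2)\Rightarrow(3)\Rightarrow(4)\Rightarrow(1)$, and the two places where you diverge are exactly where you run into trouble.

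For $(1)\Rightarrow(2)$ you essentially re-run the $D_i$-by-$D_i$ analysis from the $(2)\Rightarrow(3)$ direction of Theorem~\ref{the:char fully}, re-establishing the V-domain, Bezout, and right Noetherian properties with the extra collapsing step for uniform sums of virtually simples. That is workable, but it misses the very short argument the paper uses: invoke Theorem~\ref{the:char fully}, reduce to showing every finitely generated module is virtually semisimple, and then for $K\leq M$ take $L$ with $K\oplus L\leq_{e}M$; by hypothesis (1) both $M$ and $K\oplus L$ are finite direct sums of virtually simple modules, Lemma~\ref{pro:embbed}(i) makes them subisomorphic, and Theorem~\ref{the:Krull-esch} then forces $M\cong K\oplus L$, which is exactly virtual semisimplicity. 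This buys you $(1)\Rightarrow(2)$ with no new case analysis.

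The genuine gap is in $(4)\Rightarrow(1)$, which you flag yourself as not completed. Your proposed route through Azumaya's theorem and a ``transfer'' along injective hulls is both unnecessary and, as stated, unresolved. The key observation you are missing is elementary: if $U$ is uniform and $f:U\hookrightarrow V_{1}\oplus\cdots\oplus V_{k}$, then the kernels of the composites $\pi_{i}f$ intersect in $\ker f=0$, so by uniformity at least one of them is zero; hence $U$ embeds in a single $V_{j}$. Since $V_{j}$ is virtually simple, every non-zero submodule of $V_{j}$ is isomorphic to $V_{j}$, so $U\cong V_{j}$ outright. Combined with the decomposition of a finitely generated extending module of finite uniform dimension into uniform summands (\cite[Lemma 6.43]{Lam1}), this immediately gives (1). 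No passage to injective hulls, Azumaya uniqueness, or the fallback $(4)\Rightarrow(2)$ is needed; the ``transfer'' problem you describe simply does not arise because embedding into a virtually simple module already yields the isomorphism.
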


\begin{proof}
Since the statement (2)  is symmetric,  we only need to prove $(1)\Leftrightarrow (2)\Leftrightarrow (3)\Leftrightarrow (4)$.

\noindent $(1)\Rightarrow (2)$. By Theorem \ref{the:char fully}, it suffices to prove that all finitely generated $R$-modules are
virtually semisimple. By Theorem \ref{the:maintheorem}, $R$ is left Noetherian. Let $M$ be a
 finitely generated left $R$-module   and $K\leq M$. It is well-known that $K\oplus L \leq_e M$ for some $L\leq M$.  By our assumption the modules  $M$ and  $N:=K\oplus L$ are direct sum of virtually simple modules. Thus  by  Lemma \ref{pro:embbed}, $M$ and $N$ are subisomorphic  and so by Theorem \ref{the:Krull-esch}, $M\cong N$; proving that $_RM$ is virtually semisimple.

\noindent $(2)\Rightarrow (3)$. By Theorem \ref{the:char fully} and Proposition \ref{pro:charvs}, every finitely generated left $R$-module
 is a direct sum of a semisimple  module and a completely virtually semisimple projective module.
  Thus (3) is obtained by
Proposition \ref{pro: com vss is ds} and Theorem \ref{the:Krull-esch}.

\noindent $(3)\Rightarrow (4)$. Since every simple $R$-module is either singular or projective, the condition (3) shows that every
finitely generated left  $R$-module is a direct sum of a singular and a projective  module.
 Thus (4) is obtained by [14, Corollary 11.4].

\noindent  $(4)\Rightarrow (1)$.  Let $M$ be a finitely generated left $R$-module. By assumption, $M$ is extending with finite uniform dimension. Thus by \cite[Lemma 6.43]{Lam1}, $M$ is a direct sum of uniform modules. So it is enough to show that every finitely generated uniform left $R$-module is virtually simple. Note that if $U$ is a finitely generated  uniform left $R$-module and    $U\hookrightarrow\oplus_{i=1}^k V_i$ where each $V_i$ is a virtually simple $R$-module,     then by induction we can show that  that  $U\hookrightarrow V_j$ for some $j$. It follows that  $U\cong V_j$ and the proof is complete.
\end{proof}

Let $R$ be a ring and $M$ an $R$-module.  An $R$-module $N$ is {\it  generated}  by $M$ or {\it $M$-generated}  if there exists
an epimorphism $M^{(\Lambda)}\longrightarrow N$ for some  index set $\Lambda$.
An $R$-module $N$ is said to be {\it subgenerated} by $M$  if $N$ is isomorphic to a submodule
of an $M$-generated module. For an $R$-module $M$, we denote by $\sigma[M]$ the {\it full subcategory} of $R$-Mod whose objects are all $R$-modules subgenerated by $M$. It is clear that if $M = R$ then $\sigma[M]$ coincides with the category $R$-Mod.

\begin{Rem} {\rm  As another application of the theory of  virtually semisimple modules
  we  shows that  the term ``cyclic" must be removed from  statement (f) of \cite[Proposition 13.3]{Dun}. In fact,
   in Example \ref{exa:counterexa exten} we show that the  following       statements are not equivalent.}\vspace{2mm}\\
{\rm (1)} {\it Every module $N \in \sigma[M]$ is an extending module.}\vspace{1mm}\\
{\rm (2)} {\it Every cyclic module in $\sigma[M]$ is a direct sum of an $M$-projective module and a
semisim-\ \indent ple module.}
\end{Rem}

\begin{Examp} \label{exa:counterexa exten}
{\rm
 Assume that ring $D$ is the same as in Example \ref{exa:cozzen} (example attributed to Cozzens).  It is clear that $\sigma[D]=D$-Mod and since $D$ is not left Artinian, so by \cite[Proposition 13.5, Part g]{Dun},  there is a left $D$-module $M$ such that $M$ is not an extending module. On the other hand,  $D$ is a principal ideal $V$-domain and so  by Corollary \ref{cor:SS+pro}, every cyclic left  $D$-module  is a direct sum of a projective module  and a semisimple module, and hence in the above (2) does not imply (1).}\end{Examp}

We note that  the class of virtually simple modules
is not closed under homomorphic image. For example the $\Bbb Z$-module $\Bbb Z/4\Bbb Z$ is not virtually semisimple but
$\Bbb Z$ is clearly completely virtually semisimple $\Bbb Z$-module.  Thus give the following definitions.

  \begin{Def}
 {\rm An  $R$-module $M$ is called}  fully  virtually semisimple {\rm if for reach $N\leq M$, the $R$-module $M/N$ is virtually semisimple. If $_RR$ (resp., $R_R$) is fully  virtually semisimple, we then  say that  $R$ is a {\it left} (resp., {\it right})  {\it fully  virtually semisimple}. Also, a ring  $R$ is called  a {\it fully  virtually semisimple ring} if it is both a left and right fully  virtually semisimple ring.}
\end{Def}

By Proposition \ref{pro: com vss is ds} and the next proposition,
we have the following irreversible implications for an $R$-module $M$:
\begin{center}
 $M$ is fully virtually semisimple $\Rightarrow$ $M$ is completely virtually semisimple  $\Rightarrow$ $M$ is a finite direct sum of virtually simple modules
\end{center}

 \begin{Pro}\label{pro:left fully completely}
Every  finitely generated fully virtually semisimple module $M$ is  completely virtually semisimple.
 \end{Pro}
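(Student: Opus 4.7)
My plan is to show directly that every submodule of $M$ is virtually semisimple, by exhibiting each submodule of $M$ as a quotient of $M$ and then invoking the fully virtually semisimple hypothesis. The first observation is that, by taking $N=0$ in the definition, fully virtually semisimple forces $M$ itself to be virtually semisimple; combined with the finite generation assumption, Lemma \ref{pro:property1}(i) then gives that $M\cong R/P_1\oplus\cdots\oplus R/P_n$ is a finite direct sum of critical Noetherian modules, so $M$ is Noetherian and every submodule of $M$ is again finitely generated.

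Fix an arbitrary submodule $K\leq M$. I would pick a pseudo-complement $L\leq M$, namely a submodule maximal with respect to $K\cap L=0$ (which exists by Noetherianity or Zorn's lemma); the usual argument then shows that $K\oplus L\leq_{e}M$. Since $M$ is finitely generated and virtually semisimple, Lemma \ref{pro:property1}(ii) supplies an isomorphism $\psi\colon K\oplus L\to M$. Setting $L':=\psi(L)\leq M$, we obtain
\[
K\;\cong\;(K\oplus L)/L\;\cong\;M/L',
\]
which realises $K$ as a quotient of $M$.

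Now the fully virtually semisimple hypothesis on $M$ says exactly that $M/L'$ is virtually semisimple, so $K$ is virtually semisimple. Since $K$ was arbitrary, $M$ is completely virtually semisimple, completing the proof. I do not foresee a serious obstacle here: the entire argument rests on Lemma \ref{pro:property1}(ii), which upgrades any essential submodule of a finitely generated virtually semisimple module to an isomorphic copy of the whole module, and this is precisely what allows an arbitrary submodule $K$ to be placed inside the class of quotients of $M$ that the fully virtually semisimple hypothesis governs.
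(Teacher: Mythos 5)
Your proof is correct and takes essentially the same approach as the paper: both pick a pseudo-complement $L$ with $K\oplus L\leq_e M$, realize $K$ (up to isomorphism) as a quotient of $M$, and invoke the fully virtually semisimple hypothesis on that quotient. The only cosmetic difference is that you apply Lemma~\ref{pro:property1}(ii) to $M$ to get $K\oplus L\cong M$ and then transport $L$ to $L'$, whereas the paper applies the same lemma to $M/L$ (using $(K\oplus L)/L\leq_e M/L$) to conclude $K\cong M/L$ directly.
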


 \begin{proof}
 Assume that $K\leq M$. It is well-known that there exists $L\leq M$ such that $L\oplus K \leq_{e} M$
 and $K\cong (L\oplus K) / L \leq_{e} M/L$. Since $M/L$ is finitely generated virtually semisimple, so $K\cong M/L$, by Lemma \ref{pro:property1} (ii). Thus  $K$ is virtually semisimple.
 \end{proof}

We conclude the paper with the following corollary that   gives a partial solution to Question 1.9
raised in the  introduction. In fact the following is an  answer
to the question in the case that  ``every left and every right
cyclic $R$-module  is virtually semisimple".  However, finding the
structure of non-commutative left fully virtually semisimple rings
(rings each of whose left  cyclic $R$-modules is  virtually
semisimple)  is still an open question.

 \begin{Cor}
The following statements  are equivalent for a ring $R$.\vspace{2mm}\\
{\rm (1)} Every left and every right  cyclic $R$-module  is virtually semisimple (i.e.,   $R$ is a  fully \indent virtually semisimple ring).\vspace{1mm}\\
{\rm (2)}  $R\cong \prod_{i=1}^k M_{n_i}(D_i)$ where each $D_i$ is a principal ideal V-domain.
\end{Cor}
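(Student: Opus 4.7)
The implication $(2) \Rightarrow (1)$ is immediate from Theorem~\ref{the:char fully}: if $R \cong \prod_{i=1}^{k} M_{n_i}(D_i)$ with each $D_i$ a principal ideal V-domain, then every finitely generated left and right $R$-module is virtually semisimple, and in particular every cyclic module on either side is, so $_RR$ and $R_R$ are fully virtually semisimple.

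For $(1) \Rightarrow (2)$, the strategy is first to extract the matrix-ring structure and then to upgrade each factor to a V-domain. Since $_RR$ and $R_R$ are finitely generated and (by hypothesis) fully virtually semisimple, Proposition~\ref{pro:left fully completely} makes both completely virtually semisimple, so $R$ is a left and right completely virtually semisimple ring. Theorem~\ref{the:maintheorem} then yields $R \cong \prod_{i=1}^{k} M_{n_i}(D_i)$ with each $D_i$ a principal left ideal domain; the same theorem applied to $R^{op}$, together with the uniqueness clause of Theorem~\ref{the:maintheorem}, forces each $D_i$ also to be a principal right ideal domain. Thus each $D_i$ is a principal ideal domain, and it remains only to show that it is a V-domain.

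Fix $i$ and write $D = D_i$, $n = n_i$. Using Lemma~\ref{pro:zarb} (together with the observation that cyclic modules over a direct product of rings split as pairs of cyclic modules over the factors), the factor $M_n(D)$ is again left and right fully virtually semisimple. I transfer this to $D$ via the Morita equivalence $M_n(D)\text{-Mod} \approx D\text{-Mod}$: a cyclic left $D$-module $D/I$ corresponds to $(D/I)^{(n)}$ as an $M_n(D)$-module, and since $D^{(n)} \cong M_n(D)\cdot e_{11}$ is a principal (hence cyclic) left ideal of $M_n(D)$, the quotient $(D/I)^{(n)}$ is a cyclic left $M_n(D)$-module. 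By Morita invariance of virtual semisimplicity and our hypothesis, every cyclic left $D$-module is virtually semisimple. Now let $S$ be a simple left $D$-module with injective hull $E=E(S)$, and let $x\in E$. If $Dx \neq 0$, then since $S \leq_e E$ and $S$ is simple we have $S \leq Dx$ with $S \leq_e Dx$. By Lemma~\ref{pro:property1}(ii) applied to the finitely generated virtually semisimple module $Dx$, we obtain $Dx \cong S$; being simple, $Dx$ must equal $S$ inside $E$, so $x \in S$. Hence $E = S$, proving that $S$ is injective and $D$ is a left V-ring. The symmetric argument (using right-sided hypothesis) gives that $D$ is a right V-ring, so $D$ is a V-domain.

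The step I expect to be the main obstacle is the Morita transfer in the third paragraph: one must verify cleanly that cyclic $D$-modules go to cyclic (not merely finitely generated) $M_n(D)$-modules, which hinges on the small but essential observation that $D^{(n)} \cong M_n(D)\cdot e_{11}$ is itself a principal left ideal of $M_n(D)$. Once this is established, both the extraction of the matrix decomposition and the subsequent V-domain argument are straightforward consequences of Theorems~\ref{the:maintheorem} and~\ref{the:char fully}, Proposition~\ref{pro:left fully completely}, and Lemma~\ref{pro:property1}(ii).
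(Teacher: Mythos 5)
Your proof is correct and follows the same overall skeleton as the paper: Proposition~\ref{pro:left fully completely} makes $R$ left and right completely virtually semisimple, Theorem~\ref{the:maintheorem} (applied to $R$ and $R^{\rm op}$, using its uniqueness clause) yields the matrix decomposition over principal ideal domains, and the injective-hull argument via Lemma~\ref{pro:property1}(ii) shows each factor is a V-domain. The two places where you diverge from the paper's presentation are worth noting. First, the paper obtains \emph{left} V-domain by citing the argument inside the proof of Theorem~\ref{the:char fully}$(2)\Rightarrow(3)$ and then invokes Remark~\ref{lem:PLID PRID v-dom} (the Jain--Lam--Leroy result that a principal ideal domain which is a left V-domain is a two-sided V-domain); you instead run the injective-hull argument symmetrically on both sides, using the two-sided hypothesis directly, which avoids the appeal to that external result. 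Second, you carefully justify that the fully-virtually-semisimple hypothesis actually descends to each $D_i$ by observing that $D^{(n)}\cong M_n(D)e_{11}$ is a cyclic left ideal, so the Morita equivalence carries cyclic $D$-modules to cyclic $M_n(D)$-modules; the paper glosses over this by citing a proof whose hypothesis (\emph{all} finitely generated modules completely virtually semisimple) is strictly stronger than the corollary's, so your clarification genuinely fills a detail that the paper leaves implicit.
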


\begin{proof}
$(1) \Rightarrow (2)$.  By Proposition \ref{pro:left fully completely}, the ring  $R$ is a left and a right completely virtually semisimple ring. Thus by Theorem \ref{the:maintheorem},
  $R\cong \prod_{i=1}^k M_{n_i}(D_i)$ where each $D_i$ is a principal ideal domain.  As seen in the proof  $(2)\Rightarrow (3)$ of  Theorem \ref{the:char fully},
we obtain  each  $D_i$ is a left V-domain, and hence by Remark \ref{lem:PLID PRID v-dom},  each $D_i$ is a principal ideal V-domain.

\noindent $(2) \Rightarrow (1)$  is by the second generalization Wedderburn-Artin Theorem.
 \end{proof}

 \end{document}